\theoremstyle{plain}
\newtheorem{thm}{Theorem}[section]
\newtheorem{corollary}[thm]{Corollary}
\newtheorem{proposition}[thm]{Proposition}
\newtheorem{lemma}[thm]{Lemma}
\newtheorem{remark}[thm]{Remark}
\newtheorem{prop}[thm]{Proposition}
\theoremstyle{definition}
\newtheorem{definition}[thm]{Definition}
\def\sSet        {\text{sSet}} 
\def\Top         {\text{Top}} 
\def\Alg         {\text{AlgKan} }
\def\AlgC        {\text{Alg}\mathcal{C}}
\def\AlgCH       {\texorpdfstring{$\AlgC$}{AlgC}}
\def\colim       {\text{colim}}
\def \dotto      {\!\xymatrix@C=0.5cm{\ar@{-->}[r] &}}
\def\indlim      {\underrightarrow{\lim}}
\def\CoEq        {\text{CoEq}}
\def\Sing        {\text{Sing}}  
\def\ASing       {\Pi_\infty}  
\def\C	         {\mathcal{C}}
\def\AlgQ		 {\text{AlgQuasi}}
\def\inftyH      {\texorpdfstring{$\infty$}{oo}}
\begin{document}

\title{Algebraic models for higher categories}
\author{Thomas Nikolaus \thanks{Email address: Thomas.Nikolaus@uni-hamburg.de \newline 2000 Mathematics Subject Classification: Primary: 55U35; Secondary: 18G30.} \\ \\
Fachbereich Mathematik, \ Universit\"at Hamburg\\
  Schwerpunkt Algebra und Zahlentheorie\\
  Bundesstra\ss e 55, \ D\,--\,20\,146\, Hamburg}
\date{}
\maketitle




\begin{abstract} \noindent
We introduce the notion of algebraic fibrant objects in a general model category and establish a (combinatorial) model category structure on algebraic fibrant objects. Based on this construction we propose algebraic Kan complexes as an algebraic model for $\infty$-groupoids and algebraic quasi-categories as an algebraic model for $(\infty,1)$-categories. We furthermore give an explicit proof of the homotopy hypothesis.
\end{abstract}

\tableofcontents
\section{Introduction}

Simplicial sets have been introduced as a combinatorial model for topological spaces. It has been known for a long time that topological spaces and certain simplicial sets called Kan complexes are 'the same' from the viewpoint of homotopy theory. To make this statement precise Quillen \cite{quillen1967homotopical} introduced the concept of model categories and equivalence of model categories as an abstract framework for homotopy theory. He endowed the category $\Top$ of topological spaces and the category $\sSet$ of simplicial sets with model category structures and showed that $\Top$ and $\sSet$ are equivalent in his sense. He could identify Kan complexes as fibrant objects in the model structure on $\sSet$. \\

Later higher category theory came up. A 2-category has not only objects and morphisms, like an ordinary (1-)category, but also 2-morphisms, which are morphisms between morphisms. A 3-category has also 3-morphisms between 2-morphisms and so on. Finally an $\infty$-category has $n$-morphisms for all $n \geq 1$. Unfortunately it is very hard to give a tractable definition of $\infty$-categories. See \cite{leinster2002survey} for several definitions of higher categories. 

An interesting subclass of all $\infty$-categories are those $\infty$-categories for which all $n$-morphisms are invertible. They are called $\infty$-groupoids. A standard construction from algebraic topology is the fundamental groupoid construction $\Pi_1(X)$ of a topological space $X$. Allowing higher paths in $X$ (i.e. homotopies) extends this construction to a fundamental $\infty$-groupoid $\Pi_\infty(X)$. It is widely believed that every $\infty$-groupoid is, up to equivalence, of this form. This belief is called the homotopy hypothesis \cite{BaezHom}.

There is another important subclass of $\infty$-categories, called $(\infty,1)$-categories. These are $\infty$-categories where all $n$-morphisms for $n \geq 2$ are invertible. Thus the only difference to $\infty$-groupoids is that there may be non-invertible $1$-morphisms. In particular the collection of all small $\infty$-groupoids forms a $(\infty,1)$-category. Another example of a $(\infty,1)$-category is the category of topological spaces where the $n$-morphisms are given by $n$-homotopies. In the language of $(\infty,1)$-categories a more refined version of the homotopy hypothesis is the assertion that the fundamental $\infty$-groupoid construction provides an equivalence of the respective $(\infty,1)$-categories. \\

From the perspective of higher category theory Quillen model structures are really presentations of $(\infty,1)$-categories, see e.g. \cite{lurie2006higher} appendix A.2 and A.3. Hence we think about a model category structure as a generators and relations description of a $(\infty,1)$-category. A Quillen equivalence then becomes an adjoint equivalence of the presented $(\infty,1)$-categories. Thus the classical Quillen equivalence between topological spaces and simplicial sets really encodes an equivalence of $(\infty,1)$-categories. 

Keeping this statement in mind it is reasonable to think of a simplicial set $S$ as a model for an $\infty$-groupoid. The $n$-morphisms are then the $n$ simplices $S_n$. And in fact there has been much progress in higher category theory using simplicial sets as a model for $\infty$-groupoids. This model has certain disadvantages. First of all a simplicial set does not encode how to compose $n$-morphisms. But such a composition is inevitable for higher categories. This problem is usually addressed as follows:

The model structure axioms on $\sSet$ imply that in the corresponding $(\infty,1)$ category each simplicial set is equivalent to a fibrant object i.e.\ a Kan complex. It is possible to interpret the lifting properties of a Kan complex $S$ as the existence of compositions in the $\infty$-groupoid, see section \ref{sec:kan}. Although the lifting conditions ensure the existence of compositions for $S$, these compositions are only unique up to homotopy. This makes it sometimes hard to work with Kan complexes as a model for $\infty$-groupoids. Another disadvantage is that the subcategory of Kan complexes is not very well behaved, for example it does not have colimits and is not locally presentable. \\

The idea of this paper to solve these problems is to consider a more algebraic version of Kan complexes as model for $\infty$-groupoids. More precisely we will consider Kan complexes endowed with the additional structure of distinguished fillers. We call them algebraic Kan complexes. We show that the category of algebraic Kan complexes has all colimits and limits and is locally presentable(theorem \ref{algKanMain}.3). Furthermore we endow it with a model structure and show that it is Quillen equivalent to simplicial sets (theorem \ref{algKanMain}.4-.5) . The name algebraic will be justified by identifying algebraic Kan complexes as algebras for a certain monad on simplicial sets (theorem \ref{algKanMain} .1-2). The fact that algebraic Kan complexes really model $\infty$-groupoids will be justified by a proof of the appropriate version of the homotopy hypothesis (corollary \ref{homhyp}).

We will generalize this notion of algebraic Kan complex to algebraic fibrant objects in a general model category $\C$, which satisfies some technical conditions, stated at the beginning of section \ref{allgemein}. In particular we show that $\C$ is Quillen equivalent to the model category $\AlgC$ of algebraic fibrant objects in $\C$ (theorem \ref{main}). We show that $\AlgC$ is monadic over $\C$ (proposition \ref{sec:monad}) and that all objects are fibrant. In addition we give a formula how to compute (co)limits in $\AlgC$ from (co)limits in $\C$ (section \ref{sec:colimits}). 

Finally we apply the general construction to the Joyal model structure on $\sSet$. This is a simplicial model for $(\infty,1)$-categories \cite{JoyalBarcelona, lurie2006higher}. The fibrant objects are called quasi-categories. We propose the category $\AlgQ$ of algebraic quasi-categories as our model for $(\infty,1)$-categories (section \ref{algqdef}). One of its major advantages is that the model structure on algebraic quasi-categories can be described very explicitly, in particular we will give sets of generating cofibrations and trivial cofibrations (theorem \ref{AlgQMain}). Such a generating set is not known for the Joyal structure. \\

There have been other proposals for algebraic definitions of $\infty$-groupoids, see e.g. \cite{Cis07}. But as pointed out in the introduction of \cite{Mal} the issue to find a locally presentable algebraic model is still open. Strictly speaking the problem described there is solved by the model of algebraic Kan complexes. Nevertheless higher category theorists may feel that algebraic Kan complexes are after all not the kind of model that they were looking for. But since the model presented here is formally algebraic by the usual definition, this at least indicates that the formal definition for what is supposed to count as an algebraic model for $\infty$-groupoids might need to be refined.\\

This paper is organized as follows. In Section \ref{allgemein} we give the definition of the category $\AlgC$ for a general model category $\C$. We prove that $\AlgC$ enjoys excellent categorical properties and that it admits a model structure Quillen equivalent to $\C$. In Section \ref{algKan2} we investigate algebraic Kan complexes as a model for $\infty$-groupoids. In particular we prove the homotopy hypothesis. In section \ref{quasi} we investigate algebraic quasi-categories as a model for $(\infty,1)$-categories. Furthermore we compare algebraic Kan complexes and algebraic quasi-categories. In Section \ref{outlook} we sketch some further possible applications.\\

{\bf Acknowledgements.} The author is supported by the Collaborative Research Centre 676 ``Particles, Strings and the Early Universe - the Structure of Matter and Space-Time''. The author thanks Urs Schreiber, Mike Shulman, Emily Riehl, Richard Garner, Till Barmeier and Christoph Schweigert for helpful discussions and comments on the draft.

\section{Algebraic fibrant objects}\label{allgemein}

Let $\C$ be a cofibrantly generated model category. For the terminology of model categories we refer to \cite{hovey2007model}.
Furthermore we make the assumption:
\begin{center}
  All trivial cofibrations in $\C$ are monic. 
\end{center}
This is true in many model categories. For example in simplicial sets with either the Quillen or the Joyal model structure. \\

Choose a set of trivial cofibrations
$$ \{ A_j \to B_j \}_{j \in J}$$
in $\C$ such that an object $X\in \C$ is fibrant iff for every morphism $A_j \to X$ with $j\in J$ there exists a filler, that means a morphism $B_j \to X$ rendering the diagram
\begin{equation} \label{filler}
\xymatrix{
A_j \ar[r]\ar[d] & X \\
B_j \ar[ru] & 
}
\end{equation}
commutative. We could take $J$ to be a set of generating trivial cofibrations but in general $J$ might be smaller. We assume that the domains $A_j$ are small objects and that $\C$ is cocomplete, so that
Quillen's small object argument yields a fibrant replacement. For simplicity we assume that the $A_j$'s are $\omega$-small but everything is still valid if they are only $\kappa$-small for an arbitrary (small) regular cardinal $\kappa$.\\

In order to have a more algebraic model for fibrant objects we want to fix fillers for all diagrams.

\begin{definition}
An \emph{algebraic fibrant object} (of $\C$) is an object $X \in \C$ together with a distinguished filler for each morphism $h: A_j \to X$ with $j \in J$. That means a morphism $F(h): B_j \to X$ rendering diagram \eqref{filler} commutative. A map of algebraic fibrant objects is a map that sends distinguished fillers to distinguished fillers. The category of algebraic fibrant objects is denoted by $\AlgC$.
\end{definition}

In particular for each algebraic fibrant object the underlying object $X \in \C$ is fibrant because all fillers exist. Now we have the canonical forgetful functor 

$$ U: \AlgC \to \C $$
 
which sends an algebraic fibrant object to the underlying object of $\C$. The task of this section is to show that $U$ induces an equivalence between model categories. More precisely, we want to endow $\AlgC$ with a model category structure and show that $U$ is the right adjoint of a Quillen equivalence.

\subsection{Free algebraic fibrant objects}\label{sec:free}

As a first step we give an explicit description of the left adjoint 
$$F: \C \to \AlgC$$
called the free algebraic fibrant object functor. \\

We want to use Richard Garner's improved version of Quillen's small object argument \cite{garner2009understanding}. 
The idea is to start with an object $X \in \C$ and to successively add  fillers in a free way. So we define $X_1$ to be the pushout (in $\C$) of the diagram
$$
\xymatrix{
\bigsqcup A_j \ar[r] \ar[d] & X \ar[d] \\
\bigsqcup B_j \ar[r] & X_1
}
$$
where the disjoint unions are taken over all $j \in J$ and morphisms $A_j \to X$. Note that the inclusion $X \to X_1$ is monic due to our assumptions that trivial cofibrations are monic. \\

For those morphisms $h: A_j \to X_1$ which factor through $X$ the structure morphisms $B_j \to X_1$ provide fillers. These are well defined because the factorization of $h$ through $X$ is unique since $X \to X_1$ is monic. Unfortunately there might be morphisms $h: A_j \to X_1$ which do not factor through $X$. Thus in order to add additional fillers for these let $X_2$ be the pushout

$$
\xymatrix{
\bigsqcup A_j \ar[r] \ar[d] & X_1 \ar[d] \\
\bigsqcup B_j \ar[r] & X_2
}
$$
where the disjoint union is taken over $j \in J$ and those morphisms $A_j \to X_1$ which do not factor through $X$. Note that this differs from the ordinary small object argument where this colimit is taken over all morphisms $A_j \to X_1$. We again bookmark the fillers $B_j \to X_2$ and proceed inductively. 
Eventually we obtain a sequence 
$$ X  \to X_1 \to X_2 \to X_3 \ldots $$
where all morphisms are by construction trivial cofibrations. Let 
$$X_\infty := \indlim(X  \to X_1 \to X_2 \to X_3 \ldots)$$
be the colimit over this diagram. Note that the inclusion $X \to X_\infty$ is by construction a trivial cofibration, 
in particular a weak equivalence.\\

Now let $h: A_j \to X_\infty$ be a morphism. Because $A_j$ is $\omega$-small this factors through a finite step and our construction implies that there is a unique smallest $m$ such that $h$ factors through $X_m$. Then we have the filler $$F(h): B_j \to X_{m+1} \to X_{\infty}.$$
This makes $X_\infty$ into an algebraic fibrant object. 

\begin{prop}\label{adjoint}
The assignment $F: \C \to \AlgC$ which sends $X$ to $X_\infty$ is left adjoint to $U: \AlgC \to C$. Furthermore the unit of this adjunction is the inclusion $X \to X_\infty$, hence a weak equivalence. 
\end{prop}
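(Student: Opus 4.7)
The plan is to construct the adjunction bijection
$$\text{Hom}_{\AlgC}(F(X), Y) \cong \text{Hom}_\C(X, U(Y))$$
directly. The easy direction is restriction along the unit: $g \mapsto g \circ (X \hookrightarrow X_\infty)$. The substantive direction is extension: given $f: X \to U(Y)$, I will produce a morphism $\tilde f: X_\infty \to Y$ in $\AlgC$ by induction along the stages of the construction.

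Set $\tilde f_0 := f: X \to Y$. Given $\tilde f_m: X_m \to Y$, define $\tilde f_{m+1}: X_{m+1} \to Y$ via the universal property of the pushout used to form $X_{m+1}$: for every map $h: A_j \to X_m$ in the indexing set (all such maps when $m=0$; those not factoring through $X_{m-1}$ when $m \geq 1$), take the distinguished filler $B_j \to Y$ of $\tilde f_m \circ h$, which exists because $Y$ carries an algebraic fibrant structure. This yields a map $\bigsqcup B_j \to Y$ compatible with $\tilde f_m$, and hence produces $\tilde f_{m+1}$. Taking the colimit gives $\tilde f: X_\infty \to Y$ in $\C$.

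Two things need checking. First, that $\tilde f$ is a morphism in $\AlgC$: for any $h: A_j \to X_\infty$, the $\omega$-smallness of $A_j$ together with the fact that each $X_m \hookrightarrow X_{m+1}$ is monic guarantees a unique smallest $m$ through which $h$ factors. By construction, the factored map $h': A_j \to X_m$ belongs to the indexing set for the pushout defining $X_{m+1}$, so the distinguished filler of $h$ in $X_\infty$ is $B_j \to X_{m+1} \to X_\infty$, and $\tilde f_{m+1}$ maps it to the distinguished filler of $\tilde f \circ h$ in $Y$, by definition of $\tilde f_{m+1}$. Second, the correspondence is bijective: extension followed by restriction is immediately the identity, and conversely, given $g: X_\infty \to Y$ in $\AlgC$, an induction on $m$ shows $\widetilde{g|_X}$ agrees with $g$ on each $X_m$. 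The inductive step uses precisely that $g$ preserves distinguished fillers, because the structure maps $B_j \to X_{m+1}$ are themselves the distinguished fillers in $X_\infty$ of their underlying maps. Naturality in both arguments is then transparent.

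Finally, each $X_m \to X_{m+1}$ is a pushout of a coproduct of maps from $J$, which are trivial cofibrations; since trivial cofibrations are closed under coproducts, pushouts, and transfinite composition, the unit $X \to X_\infty$ is a trivial cofibration, and in particular a weak equivalence. The main delicate point of the argument is the bookkeeping of the ``unique smallest $m$'' factorization; this is what distinguishes the construction from the ordinary small object argument and is essential for the distinguished fillers on $X_\infty$ to be simultaneously well-defined and preserved by $\tilde f$.
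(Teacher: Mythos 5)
Your proof is correct and follows essentially the same route as the paper: the paper simply asserts that the extension of $f$ to $X_\infty$ exists and is unique ``since all we did was gluing new distinguished fillers,'' while you spell out the stagewise induction, the role of the unique smallest factorization stage, and the verification that the unit is a trivial cofibration. Nothing further is needed.
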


\begin{proof}
Let $Z$ be an algebraic fibrant object. We have to show that for a morphism $\varphi: X \to U(Z)$ in $\C$ there is a unique morphism $\varphi_\infty: X_\infty \to Z$ in $\AlgC$ rendering
$$
\xymatrix{
X \ar[r]^\varphi\ar[d] & Z \\
X_\infty \ar@{-->}[ru]_{\exists ! \varphi_\infty} & 
}
$$
commutative.
But this is trivial since all we did in going from $X$ to $X_\infty$ was gluing new distinguished fillers which have to be sent to the distinguished fillers in $Z$. 
\end{proof}

\begin{corollary}\label{retrakt}
Each of the maps $i: FA_j \to FB_j$ admits a canonical retract (left inverse).
\end{corollary}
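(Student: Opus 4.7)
The plan is to build the retract via the adjunction $F \dashv U$ from Proposition \ref{adjoint} together with the fact that the free object $FA_j$ is, by construction, algebraically fibrant and so carries distinguished fillers for every $j \in J$. I will read $i: FA_j \to FB_j$ as the image under $F$ of the generating trivial cofibration $A_j \hookrightarrow B_j$; by naturality of the unit this satisfies $U(i) \circ \eta_{A_j} = \eta_{B_j} \circ (A_j \hookrightarrow B_j)$, where $\eta$ is the unit of the adjunction.

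First I will transpose the problem across the adjunction. A map $r: FB_j \to FA_j$ in $\AlgC$ is the same datum as a map $\tilde r: B_j \to U(FA_j) = (A_j)_\infty$ in $\C$. Using the triangle identities, the composite $r \circ i$ corresponds under transposition to
\[
A_j \hookrightarrow B_j \xrightarrow{\tilde r} (A_j)_\infty,
\]
while $\mathrm{id}_{FA_j}$ corresponds to the unit $\eta_{A_j}: A_j \to (A_j)_\infty$. Hence $r$ is a retract of $i$ if and only if $\tilde r$ fills the lifting diagram
\[
\xymatrix{
A_j \ar[r]^-{\eta_{A_j}} \ar[d] & (A_j)_\infty \\
B_j \ar@{-->}[ur]_{\tilde r} &
}
\]

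Second, I will observe that this is precisely the kind of diagram for which $FA_j$ has a distinguished filler: the horizontal map is a morphism $A_j \to (A_j)_\infty$ with $j \in J$, and $(A_j)_\infty = U(FA_j)$ is algebraically fibrant by construction. Taking $\tilde r$ to be this distinguished filler and transposing back yields the canonical map $r: FB_j \to FA_j$, and the above chase shows that $r \circ i = \mathrm{id}_{FA_j}$. Canonicity is automatic: no choices are made beyond the distinguished filler already attached to $FA_j$.

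I do not expect a genuine obstacle here; the argument is essentially a bookkeeping exercise in the adjunction. The only points to be careful about are (i) distinguishing the overloaded symbol $F$ (free functor versus filler assignment) and (ii) checking that "retract" is meant in the left-inverse sense, so that the condition is $r \circ i = \mathrm{id}$ rather than $i \circ r = \mathrm{id}$ — the corollary already resolves this.
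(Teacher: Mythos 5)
Your argument is correct and is essentially identical to the paper's own proof: transpose the retraction problem across the adjunction $F \dashv U$, observe that a left inverse for $i$ corresponds to a filler of the unit $A_j \to UFA_j$ along $A_j \to B_j$, and take the distinguished filler that exists because $FA_j$ is an algebraic fibrant object. Nothing further is needed.
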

\begin{proof}
A retract is a map $r: FB_j \to FA_j$ such that the composition 
$$FA_j \stackrel{i}{\to} FB_j \stackrel{r}{\to} FA_j$$
is the identity. Because $F$ is left adjoint to $U$, this is the same thing as a map $r':B_j \to UFA_j$ such that
the composition 
$$ A_j \to B_j \stackrel{r'}{\to} UFA_j$$
is the unit of the adjunction, i.e. the inclusion $i': A_j	\to FA_j$. Hence $r'$ is just a filler for the morphism $i'$, and such a filler exists canonically because $FA_j$ is an algebraic fibrant object.
\end{proof}

\subsection{Monadicity}\label{sec:monad}

In this section we want to show that algebraic fibrant objects in $\C$ are algebras for a certain monad. This is a rather direct justification to call them algebraic. Let $T$ be the monad which is induced by the adjunction
$$ 
\xymatrix{
F : \C \ar@<0.3ex>[r] & \AlgC : U \ar@<0.7ex>[l]
}$$
That means $T = U \circ F : \C \to \C$. 

\begin{prop}\label{monad}
The category $\C^T$ of $T$-algebras in $\C$ is equivalent to the category $\AlgC$. More precisely the functor $U$ induces
an equivalence $U^T: \AlgC \to \C^T$.  
\end{prop}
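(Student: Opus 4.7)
The plan is to apply Beck's monadicity theorem to the adjunction $F \dashv U$ of Proposition \ref{adjoint}. Recall that $U$ is monadic provided (i) it has a left adjoint, (ii) it is conservative (reflects isomorphisms), and (iii) it creates coequalizers of $U$-split pairs. Condition (i) is precisely Proposition \ref{adjoint}, so the work is in (ii) and (iii).

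For conservativity, suppose $f : X \to Y$ is a morphism in $\AlgC$ with $Uf$ invertible in $\C$, say with inverse $g : UY \to UX$. The only thing to check is that $g$ preserves distinguished fillers. For a map $h : A_j \to UY$, the fact that $f$ preserves fillers gives $f \circ F_X(gh) = F_Y(fgh) = F_Y(h)$; precomposing with $g$ then yields $F_X(gh) = g \circ F_Y(h)$, as required. So $f$ is already invertible in $\AlgC$.

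Creation of $U$-split coequalizers is the main step and the place where care is required. Given $f,g : X \rightrightarrows Y$ in $\AlgC$ whose images have a split coequalizer
\[
\xymatrix{ UX \ar@<0.5ex>[r]^{Uf} \ar@<-0.5ex>[r]_{Ug} & UY \ar[r]^{q} \ar@/_1ex/[l]_{s} & Z \ar@/_1ex/[l]_{t} }
\]
in $\C$ (with $qt=\mathrm{id}$, $fs=\mathrm{id}$, $gs=tq$), I would define, for each $h : A_j \to Z$, a distinguished filler by the formula $F_Z(h) := q \circ F_Y(th)$. The key observation is that this formula does not depend on the choice of lift through $q$: if $\tilde h : A_j \to UY$ is any map with $q\tilde h = h$, then $s\tilde h : A_j \to UX$ satisfies $f \circ s\tilde h = \tilde h$ and $g \circ s\tilde h = tq\tilde h = th$, so applying the fact that $f$ and $g$ preserve fillers gives
\[
q \circ F_Y(\tilde h) = q \circ f \circ F_X(s\tilde h) = q \circ g \circ F_X(s\tilde h) = q \circ F_Y(th),
\]
using $qf = qg$ at the middle step. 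A short verification then shows that this assignment actually produces a filler (i.e.\ restricts to $h$ along $A_j \to B_j$), that $q$ becomes a morphism in $\AlgC$, and that the structure on $Z$ is the unique one with this property — since any filler-preserving structure would force $F_Z(h) = F_Z(qth) = q F_Y(th)$.

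Finally, to see that $q$ is a coequalizer in $\AlgC$, I would take any $r : Y \to W$ in $\AlgC$ with $rf = rg$, obtain the unique factorisation $\tilde r : Z \to W$ from the coequalizer in $\C$, and check filler preservation by the direct calculation $\tilde r \circ F_Z(h) = \tilde r \circ q \circ F_Y(th) = r \circ F_Y(th) = F_W(rth) = F_W(\tilde r h)$. The hardest conceptual point is the well-definedness of $F_Z(h)$; this is where the AlgC-structure of the parallel pair, combined with the absolute nature of split coequalizers, is essential, and everything else is routine bookkeeping.
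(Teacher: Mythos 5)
Your proof is correct and follows essentially the same route as the paper: Beck's monadicity theorem, with conservativity checked by transporting fillers along the inverse, and the $U$-split coequalizer equipped with fillers via the formula $F_Z(h) = q\circ F_Y(th)$ (the paper's $\pi$ and $s$), with well-definedness/compatibility established by the same computation using $fs=\mathrm{id}$, $gs=tq$ and $qf=qg$. The only differences are notational (your $s,t$ are the paper's $t,s$) and a harmless slip in saying ``precomposing'' with $g$ where you mean postcomposing.
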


In abstract language the proposition states that the adjunction $(F,U)$ is monadic. By Beck's monadicity theorem we have to show that
\begin{enumerate}
\item\label{one} a morphism $f$ in $\AlgC$ is an isomorphism iff $U(f)$ is an isomorphism in $\C$;
\item\label{two} $\AlgC$ has coequalizers of $U$-split coequalizer pairs and $U$ preserves those coequalizers.
\end{enumerate}

Let us first turn towards property \ref{one}. Assume $f: X \to Y$ is a morphism in $\AlgC$ such that $U(f)$ is an isomorphism in $C$ with inverse $g: U(Y) \to U(X)$. It suffices to show that $g$ is a morphism in $\AlgC$, i.e.  it sends distinguished fillers to distinguished fillers. But this is satisfied since $f$ and $g$ induce isomorphisms between sets 
$\hom_{\C}(A_j,X) \cong \hom_{\C}(A_j,Y)$ and $\hom_{\C}(B_j,X) \cong \hom_{\C}(B_j,Y)$ and thus $g$ preserves distinguished fillers since $f$ does. \\

The second property is seemingly more involved. A parallel pair of arrows $f,g: X \to Y$ in $\AlgC$ is a $U$ split coequalizer pair if the corresponding coequalizer diagram in $\C$
$$\xymatrix{
 U(X) \ar@<0.7ex>[r]^-{U(g)}\ar@<-0.3ex>[r]_-{U(f)} & U(Y) \ar[r]^-{\pi}& Q
}$$
allows sections $s$ of $\pi$ and $t$ of $U(f)$ such that $U(g) \circ t = s \circ \pi$. We will endow $Q$ with the structure of an algebraic fibrant object such that it is the coequalizer of the initial pair $f,g$ in $\AlgC$. Therefore we have to fix a filler $F(h): B_j \to Q$ for each morphism $h: A_j \to Q$. Since $s$ is a section of $\pi$, the image of the morphism
$$ s \circ h : A_j \to Y $$
under $s$ is $h$. Thus we let $F(h)$ be the image of the distinguished filler for $s \circ h$. Then the following lemma shows that property \ref{two} and thus proposition \ref{monad} holds. 
\begin{lemma}
The morphism $\pi: Y \to Q$ is a morphism in $\AlgC$ which is a coequalizer of the pair $f,g: X \to Y$.
\end{lemma}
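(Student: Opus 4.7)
The plan is to verify four things in turn: that the proposed $F_Q(h) := \pi \circ F_Y(s \circ h)$ is actually a filler; that $\pi$ preserves distinguished fillers; that $\pi \circ f = \pi \circ g$ in $\AlgC$; and that $\pi$ is the universal such morphism.

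The filler property is immediate: since $F_Y(s \circ h)$ restricts along $A_j \to B_j$ to $s \circ h$, the composite $\pi \circ F_Y(s \circ h)$ restricts to $\pi \circ s \circ h = h$, using that $s$ is a section of $\pi$. Coequalization holds automatically because $\pi$ coequalizes $U(f), U(g)$ in $\C$ and the forgetful functor is faithful. For universality, given any $\varphi: Y \to Z$ in $\AlgC$ with $\varphi f = \varphi g$, the coequalizer property in $\C$ produces a unique $\bar\varphi: Q \to U(Z)$ with $\bar\varphi \circ \pi = U(\varphi)$, and a short computation shows $\bar\varphi$ respects distinguished fillers:
\[
\bar\varphi \circ F_Q(h) = \bar\varphi \circ \pi \circ F_Y(s \circ h) = \varphi \circ F_Y(s \circ h) = F_Z(\varphi \circ s \circ h) = F_Z(\bar\varphi \circ h),
\]
where the third equality uses that $\varphi$ is a morphism in $\AlgC$.

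The main obstacle, and the only step where the $U$-split hypothesis gets used, is showing that $\pi$ itself preserves distinguished fillers. Given $h: A_j \to Y$, we need $\pi \circ F_Y(h) = F_Q(\pi \circ h) = \pi \circ F_Y(s \circ \pi \circ h)$. The plan here is to reduce both sides to the image under $\pi$ of two fillers in $Y$ which come from a single filler in $X$ via $f$ and $g$, respectively. Concretely, set $k := t \circ h: A_j \to U(X)$ and let $F_X(k): B_j \to X$ be the distinguished filler. Using that $t$ is a section of $U(f)$ and that $U(g) \circ t = s \circ \pi$, we compute
\[
f \circ F_X(k) = F_Y(U(f) \circ k) = F_Y(h), \qquad g \circ F_X(k) = F_Y(U(g) \circ k) = F_Y(s \circ \pi \circ h),
\]
where both identities use that $f$ and $g$ are morphisms in $\AlgC$. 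Composing with $\pi$ and using $\pi f = \pi g$ gives the desired equality. Once this is established, all four points combine to yield the lemma.
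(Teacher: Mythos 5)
Your proof is correct and follows essentially the same route as the paper: the key step in both is to pull the horn $h: A_j \to Y$ back to $k = t\circ h$ on $X$, use that $f$ and $g$ preserve distinguished fillers to identify $F_Y(h)$ and $F_Y(s\circ\pi\circ h)$ as the two images of $F_X(k)$, and then apply $\pi\circ U(f) = \pi\circ U(g)$. Your write-up merely spells out the verifications (filler property, universality) that the paper leaves implicit.
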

\begin{proof}
First we check that $\pi$ lies in $\AlgC$.
 Take a morphism $h: A_j \to Y$. By definition of fillers in $Q$ we have to show that the fillers 
$F(h),F(\tilde h) : B_j \to Y$ for 
$h: A_j \to Y$ and for $\tilde h := s \circ \pi \circ h: A_j \to Y$ are sent to the same filler in $Q$. But we have 
$h = U(f) \circ t \circ h$ and $\tilde h = U(g) \circ t \circ h$ and therefore this follows from the fact that $Q$ is the coequalizer.

Now we want to verify the universal property. We have to check that for each morphism $\varphi: Y \to Z$ in $\AlgC$, such that $\varphi \circ f = \varphi \circ g: X \to Z$ there is a unique $\varphi_Q: Q \to Z$ in $\AlgC$ such that 
$$ \xymatrix{
X \ar@<0.7ex>[r]^-{g}\ar@<-0.3ex>[r]_-{f} & Y \ar[r]^-{\pi}\ar[d]_{\varphi}& Q\ar@{-->}[ld]^{\exists !\varphi_Q} \\
& Z & 
}$$
commutes. From the fact that $Q$ is the coequalizer in $\C$ we obtain a unique morphism $\varphi_Q$ and it only remains to show that it lies in $\AlgC$, i.e. preserves distinguished fillers. But this is automatic since $\varphi$ preserves distinguished fillers and fillers in $Q$ are by definition images of distinguished fillers in $Y$.
\end{proof}

\subsection{An auxiliary construction}\label{sec:aux}

In this section we want to prove propositions \ref{fact} and \ref{fact2}, which we will use to investigate colimits in the next section. The impatient reader can skip this section and just take note of these propositions. 

\begin{prop}\label{fact}
Let $Y$ be an algebraic fibrant object, $X$ be an object in $\C$ and 
$$ f: Y \to X $$
be a morphism (that means a morphism $f:U(Y) \to X$ in $\C$). Then there is an algebraic fibrant object $X^{f}_{\infty}$ together with a morphism $X \to X_\infty^f$ such that the composite 
$$ Y \to X \to X^{f}_{\infty}$$
is a map of algebraic fibrant objects. \\ 
Furthermore $X^{f}_{\infty}$ is initial with this property. That means it satisfies the following universal property: For each morphism $\varphi: X \to Z$ in $\C$ where $Z$ is an algebraic fibrant object, such that the composite $Y \to X \to Z$ is a morphism of algebraic fibrant objects there exists a unique morphism $\varphi_\infty: X^{f}_{\infty} \to Z$ rendering the diagram
$$
\xymatrix{
Y \ar[d]_f \ar[rd]^{\text{alg.}} & \\
X \ar[d] \ar[r]^{\varphi} & Z \\
X^{f}_{\infty} \ar@{-->}[ru]_{\exists !  \varphi_\infty} & 
}
$$
commutative. \\
If $f$ is monic in $\C$, then $X \to X_\infty^f$ is a trivial cofibration.
\end{prop}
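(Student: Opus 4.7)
The plan is to imitate the stepwise construction of the free algebraic fibrant object $F(X) = X_\infty$ from Section \ref{sec:free}, but modified so that no new filler is added for a morphism $A_j \to X_n$ that already factors through the image of $Y$; instead, the distinguished filler coming from $Y$ is reused, which will force the composite $Y \to X \to X^f_\infty$ to be algebraic. A subtlety arises when $f$ is not monic: a morphism $h: A_j \to X$ of the form $f \circ h'$ may admit several such presentations, each giving a different candidate filler $f \circ F_Y(h'): B_j \to X$, and these candidates must be identified in the target. I therefore first define $X_0$ as the coequalizer in $\C$ of the two parallel arrows $\bigsqcup B_j \rightrightarrows X$ indexed by pairs $(h'_1, h'_2)$ with $h'_1, h'_2 : A_j \to U(Y)$ satisfying $f \circ h'_1 = f \circ h'_2$, the two legs being given by $\bigsqcup f \circ F_Y(h'_1)$ and $\bigsqcup f \circ F_Y(h'_2)$ on the corresponding summand. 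If $f$ is monic this identification is vacuous and $X_0 = X$.

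Writing $f_0 : U(Y) \to X_0$ for the induced map, I next define $X_{n+1}$ as the pushout
$$
\xymatrix{
\bigsqcup A_j \ar[r]\ar[d] & X_n \ar[d] \\
\bigsqcup B_j \ar[r] & X_{n+1}
}
$$
where the coproduct ranges over morphisms $h: A_j \to X_n$ that (i) do not factor through $X_{n-1}$ (for $n \geq 1$) and (ii) do not factor through the induced map $f_n: U(Y) \to X_n$; note that for $n \geq 1$ condition (ii) is automatic, since a morphism factoring through $f_n$ would factor through $X_0 \subseteq X_{n-1}$. Set $X^f_\infty := \indlim_n X_n$. To equip it with distinguished fillers, for $h: A_j \to X^f_\infty$ use $\omega$-smallness to pick the smallest $n$ through which $h$ factors. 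If the factored $h$ further factors through $f_n$, which can only occur for $n = 0$, pick any factorization $h = f_0 \circ h'$ and let $F(h)$ be the image of $F_Y(h')$ in $X^f_\infty$; the coequalizer defining $X_0$ makes this independent of the choice of $h'$. Otherwise, let $F(h)$ be the canonical filler $B_j \to X_{n+1} \to X^f_\infty$ produced by the pushout. By construction $Y \to X \to X^f_\infty$ is a morphism of algebraic fibrant objects, and the universal property follows by induction: any $\varphi: X \to Z$ with $Y \to X \to Z$ algebraic descends uniquely to $X_0 \to Z$ because the imposed identifications already hold in $Z$ (distinguished fillers in $Z$ are unique), and then extends uniquely from $X_n \to Z$ to $X_{n+1} \to Z$ by sending each freshly-adjoined filler to the corresponding distinguished filler in $Z$.

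Finally, when $f$ is monic the coequalizer step is trivial so $X_0 = X$, and each inclusion $X_n \to X_{n+1}$ is the pushout of a coproduct of the trivial cofibrations $A_j \to B_j$, hence itself a trivial cofibration; the map $X \to X^f_\infty$ is then a transfinite (here countable) composition of trivial cofibrations, and therefore a trivial cofibration. The main obstacle of the argument is the well-definedness of the filler in case $h = f_0 \circ h'$ when $f$ is not monic, which is precisely what the initial coequalizer step is designed to resolve; once this is in place, the rest parallels the construction in Section \ref{sec:free}.
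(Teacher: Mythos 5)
There is a genuine gap in the non-monic case, and it is precisely the point the paper's own proof spends most of its effort on. Your single coequalizer identifies the fillers $f\circ F_Y(h'_1)$ and $f\circ F_Y(h'_2)$ only for those pairs $h'_1,h'_2\colon A_j\to U(Y)$ with $f\circ h'_1=f\circ h'_2$ \emph{in $X$}. But the quotient map $X\to X_0$ can create new coincidences: there may be pairs with $f_0\circ h'_1=f_0\circ h'_2$ in $X_0$ although $f\circ h'_1\neq f\circ h'_2$ in $X$ (the coequalizer glues simplices of $X$, and two factorizations through $Y$ that were distinct in $X$ may collapse to the same morphism $A_j\to X_0$). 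For such a morphism $h=f_0\circ h'_1=f_0\circ h'_2$ your recipe ``pick any factorization $h=f_0\circ h'$ and take the image of $F_Y(h')$'' is not well defined, and your justification ``the coequalizer defining $X_0$ makes this independent of the choice'' does not apply to these new pairs. This is not only a well-definedness problem for the algebra structure on $X^f_\infty$; it also breaks the claim that $Y\to X\to X^f_\infty$ is a map of algebraic fibrant objects, since the distinguished filler you assign to $f_\infty\circ h'_1$ may be the image of $F_Y(h'_2)$ rather than of $F_Y(h'_1)$.

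The paper resolves exactly this by iterating the identification step: after forming the first quotient (called $X_H$ there), new factorizations through $Y$ may have appeared, so one repeats the coequalizer construction to get $X\to X_H\to X_{H'}\to\cdots$ and passes to the colimit $X_0^f$; $\omega$-smallness of the $A_j$ then guarantees that every coincidence arising in the colimit already occurs at a finite stage and is dealt with at the next one. Your proof becomes correct if you replace your single $X_0$ by this iterated colimit before starting the free attachment of fillers. The remainder of your argument (the restriction of the pushouts to morphisms not factoring through $Y$, the observation that this restriction is automatic for $n\geq 1$, the verification of the universal property stage by stage, and the trivial-cofibration statement in the monic case, where the identification step is vacuous) is sound and matches the paper's construction.
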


Before we prove this proposition we first draw some conclusions.

\begin{remark}
\begin{itemize}
\item
Let $\emptyset$ be the initial algebraic fibrant object.  Assume that the underlying object $U(\emptyset)$ is initial in $\C$. In particular we have a unique morphism $f: U(\emptyset) \to X$. Then the universal property of $X_\infty^f$ reduces to the universal property of $X_\infty$ from proposition \ref{adjoint}. We will use this observation to give a very similar construction of $X_\infty^f$ for arbitrary $f$. Note that the assumption that $U(\emptyset)$ is initial is not always satisfied, but it holds in the case of algebraic Kan complexes and algebraic quasi-categories. 
\item In contrast to the map $X \to X_\infty$ in the case $Y = \emptyset$ the morphism $X \to X_\infty^f$ is in general neither a cofibration nor a weak equivalence. Nevertheless if $f: Y \to X$ is a monomorphism, the proposition says that it is still a trivial cofibration.
\end{itemize}
\end{remark}

We will construct $X^f_\infty$ in two steps. First consider the images under $f$ of distinguished filler diagrams of $Y$. These are diagrams
$$
\xymatrix{
A_j \ar[r]\ar[d]&  Y \ar[r] & X \\
B_j \ar[ru] \ar[rru] & &
}
$$
If we want to turn $f$ into a morphism of algebraic fibrant objects, these diagrams have to be distinguished filler diagrams of $X$. But then it might occur that a morphism $A_j \to X$ factors in different ways through $Y$ and provides different fillers $B_j \to X$. In order to avoid this ambiguity we want to identify them. Note that this ambiguity does not occur if $f$ is a monomorphism, so in this case we can skip this next step. \\

Let us describe this more technically: Let $H$ be the set of morphisms $h: A_j \to X$ which factor through $Y$. For each $h \in H$ let  $F_h$ be the set of fillers $\varphi: B_j \to X$ which are the images of distinguished fillers. $F_h$ has at least one element but it might of course be an infinite set. Now fix an element $h_0 \in H$. In order to identify the different fillers $F_{h_0}$ we take the coequalizer
$$ X_{h_0} := \CoEq\big( B_j \stackrel{\varphi}{\to} X | \varphi \in F_{h_0}\big )$$
which comes with a morphism $p_{h_0}: X \to X_{h_0}$. We now want to repeat this process inductively in order to identify the fillers for all horns $H$. Therefore we endow $H$ with a well-ordering such that $h_0$ is the least element. Assume that $X_{h'}$ is defined for all $h' < h$. Let 
$$X_{< h} := \indlim(X_{h'} \mid h' < h) $$
and define 
$$ X_h := \CoEq\big( B_j \stackrel{\varphi}{\to} X \to X_{< h} | \varphi \in F_{h}\big )$$
to be the object in $\C$ where the fillers in $F_{h}$ are identified. In this coequalizer several morphisms $B_j \stackrel{\varphi}{\to} X \to X_{< h}$ might occur, which are equal but they only contribute once (hence all copies could be left out). 
Finally let $X_H$ be the colimit 
$$ X_H := \indlim(X_h \mid h \in H) $$
and $p: X \to X_H$ be the inclusion.

\begin{remark}
We could also describe the whole process by a single colimit over a diagram $D$. For $D$ we take the diagram with an object $\left(B_j\right)_h$ for each morphism $h \in H$ (the index $h$ is just for bookkeeping) and the object $X$. Morphisms in this diagram are all $\varphi: \left(B_j\right)_h \to X$ with $\varphi \in F_h$ and no further morphisms. Then
$$ X_H = \colim D$$
which is seen by verifying the universal property.
\end{remark}

Now for each $h: A_j \to X$ which factors through $Y$ the possible distinguished fillers are all identified in $X_H$. But unfortunately a morphism $A_j \to X_H$ which factors through $Y$ might factor through $X$ in different ways and thus lead to different morphisms $A_j \to X$. Therefore there are still relations which have to be factored out. We do this by inductively repeating the construction $X \rightsquigarrow X_H$ and eventually obtain a sequence
$$ X \to X_H \to X_{H'} \to X_{H''} \ldots $$
Now Let $X_0^f$ be the colimit 
$$ X_0^f := \indlim(X \to X_H \to X_{H'} \to X_{H''} \ldots).$$
If $f$ is a monomorphism then the morphism $X \to X_0^f$ is an isomorphism because nothing has been identified so far.   

\begin{lemma}\label{lemma0}
For each $A_j \to X_0^f$ which is the image of a morphisms $A_j \to Y$ (via the morphism $Y \stackrel{f}{\to} X \to X_0^f$) we have a distinguished filler $B_j \to X_0^f$ such that $f$ sends distinguished fillers to distinguished fillers. \\
Furthermore let $\varphi: X \to Z$ be a morphism in $\C$, where $Z$ is an algebraic fibrant object. If the composition $Y \to X \to Z$ is a morphism of algebraic fibrant objects, then $\varphi$ factors uniquely through $X_0^f$:
$$
\xymatrix{
Y \ar[d]_f \ar[rd]^{\text{alg.}} & \\
X \ar[d] \ar[r]^{\varphi} & Z \\
X^{f}_{0} \ar@{-->}[ru]_{\exists !  \varphi_0} & 
}
$$
\end{lemma}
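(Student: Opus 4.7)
The lemma bundles two assertions: first, a canonical choice of distinguished fillers on $X_0^f$ for horns that lift through $Y$; second, a universal property of the pair $(X_0^f,\,X\to X_0^f)$ against morphisms into algebraic fibrant objects. My plan is to handle them in that order, since each is essentially a direct repackaging of the coequalizer and colimit universal properties already built into the definition of $X_0^f$.

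For the first part, suppose $k:A_j\to X_0^f$ is the image of some $g:A_j\to Y$. I would define the candidate filler to be the composite $B_j\xrightarrow{F(g)}Y\xrightarrow{f}X\to X_0^f$, where $F(g)$ denotes the distinguished filler of $g$ in $Y$. The real content is independence of the choice of lift. Here I would use that $A_j$ is $\omega$-small: two different lifts $g,g':A_j\to Y$ produce maps $f\circ g,\,f\circ g':A_j\to X$ whose images in the colimit $X_0^f$ agree, so by smallness they already agree in some finite stage $X_{H^{(n)}}$. The common map then appears as an element $h$ of the horn-set used to build $X_{H^{(n+1)}}$ from $X_{H^{(n)}}$, and by construction the coequalizer defining $X_h$ identifies every element of $F_h$, including both $f\circ F(g)$ and $f\circ F(g')$. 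Hence the candidate filler is well-defined in $X_0^f$, and by construction $f$ sends each distinguished filler of $Y$ to this chosen filler in $X_0^f$, which proves the first claim.

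For the second part, I would construct $\varphi_0$ by transfinite induction along the construction of $X_0^f$, at each stage invoking the universal property of the relevant coequalizer (and, at limits, of the colimit). The key observation is that for any $h\in H$ with lift $g:A_j\to Y$ of $h$, the composite $\varphi\circ f\circ F(g):B_j\to Z$ is, by the hypothesis that $\varphi\circ f:Y\to Z$ is a morphism of algebraic fibrant objects, precisely the distinguished filler in $Z$ of $\varphi\circ f\circ g=\varphi\circ h$. Since the latter map depends only on $h$, every element of the set $\varphi(F_h)\subseteq \mathrm{Hom}(B_j,Z)$ is the same map, so $\varphi$ descends uniquely over the coequalizer $X\to X_h$. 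Iterating well-orderedly over $H$ yields $X_H\to Z$; iterating the whole $X\rightsquigarrow X_H$ construction and passing to the sequential colimit yields the required $\varphi_0:X_0^f\to Z$. Uniqueness is automatic because $X_0^f$ is assembled out of $X$ by (transfinite) colimits.

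The main obstacle I expect is bookkeeping rather than conceptual: one has to be careful in the iterated rounds $X_H\rightsquigarrow X_{H'}\rightsquigarrow\cdots$ that the fillers being identified now live in $X_H$ (not in $X$), and that the analogue of $F_h$ at later stages still consists of images of distinguished fillers coming from $Y$ along the composite $Y\to X\to X_{H^{(n)}}$. Once this is carefully spelled out, both parts reduce cleanly to repeated application of the coequalizer universal property, and no further model-categorical input is needed beyond the $\omega$-smallness of $A_j$.
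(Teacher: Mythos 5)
Your proof is correct and follows essentially the same route as the paper, which disposes of the lemma in one line as holding ``by construction'': the coequalizers defining $X_0^f$ identify exactly the fillers that any algebraic morphism $Y\to Z$ forces to coincide, so $\varphi$ descends uniquely through the (epimorphic) quotient stages. Your additional use of $\omega$-smallness of $A_j$ to locate a finite stage $X_{H^{(n)}}$ where two lifts become identified is a legitimate filling-in of a detail the paper leaves implicit, not a departure from its argument.
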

\begin{proof}
The universal property holds by construction, because the images of distinguished fillers in $Z$ have to be equal due to the fact that $Y \to X \to Z$ is a morphism of algebraic fibrant objects. 
\end{proof}

The statement of lemma \ref{lemma0} looks very similar to proposition \ref{fact}. The only difference is that $X_0^f$ is not necessarily an algebraic fibrant objects because we only have fillers for those morphisms $A_j \to X_0^f$ which factor through $Y$. Hence we will attach fillers for the other morphisms in a free way, like we did in the first section. Let $X_1^f$ be the pushout of the diagram
$$
\xymatrix{
\bigsqcup A_j \ar[r] \ar[d] & X_0^f \ar[d] \\
\bigsqcup B_j \ar[r] & X_1^f
}
$$
where the disjoint union is taken over all $ A_j \to X_0^f$ which do not factor through $Y$. Then we proceed exactly as in the first section and obtain an algebraic fibrant object
$$ X_\infty^f := \indlim(X_0^f \to X_1^f \to X_2^f \tt \ldots ).$$
The inclusion $X_0^f \to X_\infty^f$ is a trivial cofibration. Thus in the case that $f$ is a monomorphism the composition $X \to X_\infty^f$ is also a trivial cofibration. The universal property of $X_\infty^f$ stated in proposition \ref{fact} holds by lemma \ref{lemma0} and construction of $X_\infty^f$. Hence we have proven proposition \ref{fact}. \\

Following an observation of Mike Shulman, we have the slightly more general statement:
\begin{prop}\label{fact2}
The functor $U: \AlgC \to \C$ is solid (or semi-topological). That means, for every family 
$$ \big\{ f_i: U(Y_i) \to X \big\}_{i \in I}$$
where $Y_i$ are algebraic fibrant objects and $X \in \C$ there exists a semi-final lift.
 That is an object $X^{f}_\infty$ together with a morphism $X \to U(X^{f}_\infty)$ such that
\begin{enumerate}
\item all the morphisms $U(Y_i) \to U(X^{f}_\infty)$ are in $\AlgC$;
\item $X^{f}_\infty$ is universal (initial) with this property.
\end{enumerate}
\end{prop}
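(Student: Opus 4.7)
The plan is to generalize directly the two-stage construction of $X_\infty^f$ used in the proof of Proposition \ref{fact}, replacing the single morphism $f: Y \to X$ by the family $\{f_i\}_{i \in I}$. Since coproducts in $\AlgC$ are not yet available, one cannot reduce to Proposition \ref{fact} by forming a single algebraic fibrant object $\bigsqcup_i Y_i$; the construction must be carried out directly.

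First I would enlarge the notion of a ``forced'' filler. Define $H$ to be the set of morphisms $h: A_j \to X$ with $j \in J$ which admit at least one factorization $h = f_i \circ \tilde h$ through some $U(Y_i)$, and for each such $h$ let $F_h$ denote the set of fillers $B_j \to X$ obtained by applying $f_i$ to the distinguished filler of $\tilde h$ in $Y_i$, ranging over all $i \in I$ and all such factorizations $\tilde h$. Any semi-final lift is forced to identify all elements of every $F_h$, so I would build $X_0^f$ by well-ordering $H$ and forming the same inductive sequence of coequalizers $X_h$ as in the proof of Proposition \ref{fact}, and then iterating the whole construction $X \rightsquigarrow X_H \rightsquigarrow X_{H'} \rightsquigarrow \cdots$ to quotient out the further ambiguities introduced by the identifications themselves.

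Next I would freely attach fillers for the remaining horns $A_j \to X_0^f$ which do not factor through any $U(Y_i)$, using the pushout-and-colimit construction of Section \ref{sec:free}. This yields an algebraic fibrant object $X_\infty^f$ together with a morphism $X \to U(X_\infty^f)$, and by construction each composite $U(Y_i) \to X \to U(X_\infty^f)$ sends distinguished fillers to distinguished fillers, so property (1) of the statement is satisfied.

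The universal property (2) is then verified exactly as in Lemma \ref{lemma0}: any $\varphi: X \to Z$ into an algebraic fibrant object $Z$ such that every composite $U(Y_i) \to X \to Z$ lies in $\AlgC$ must respect every identification performed in passing to $X_0^f$, so $\varphi$ factors uniquely through $X_0^f$, and the factorization then extends uniquely to $X_\infty^f$ by the universal property of the free filler step. The main obstacle is purely combinatorial bookkeeping: a single horn $h$ may now be forced to carry fillers coming from several different $Y_i$'s and from several different factorizations through each one, and one must check that the inductive coequalizer procedure records and identifies all such fillers correctly while still terminating, which uses only the $\omega$-smallness of the $A_j$'s just as in Proposition \ref{fact}.
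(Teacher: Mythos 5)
Your proposal is correct and is essentially the paper's own proof: the paper simply says to repeat the construction of Proposition \ref{fact} with ``factors through $Y$'' replaced by ``factors through one of the $Y_i$'s,'' which is exactly the generalization of $H$, $F_h$, the coequalizer induction, and the free-filler step that you spell out. Your explicit attention to the bookkeeping of fillers arising from multiple $Y_i$'s and multiple factorizations is a faithful elaboration of that one-line instruction, not a different route.
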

\begin{proof}
Take the proof of proposition \ref{fact} and replace the phrase ``...which factors through $Y$...'' by the new phrase ``...which factors through one of the $Y_i$'s...''.
\end{proof}
\subsection{Limits and Colimits in \AlgCH} \label{sec:colimits}

In order to show that $\AlgC$ is a model category we will show that finite limits and small colimits exist. Furthermore a precise understanding of pushouts is needed to construct the model structure. Thus in this section we want to investigate limits and colimits in $\AlgC$. \\

Let us start with limits because they are easy to understand. Consider a diagram 
$$F: \mathcal D \to \AlgC$$
indexed by a small category $\mathcal C$. It is easy to check that $\lim F$ is computed as the limit of the underlying diagram in $\C$. In particular that limit exists if and only if the limit in $\C$ exists. Since $\C$ is a model category by definition finite limits exist and so they also exist in $\AlgC$. \\

Unfortunately arbitrary colimits in $\AlgC$ are not so simple. Therefore we start with the special case of filtered colimits. Consider the filtered diagram $(L_\alpha)_{\alpha \in I}$ where $I$ is a well-ordered set. Then let 
$$
L:= \indlim\big( L_\alpha \mid \alpha \in I \big)
$$
be the colimit of the underlying objects of $\C$. A morphism $A_j \to L$ factors through a finite step  $L_{\alpha_0}$ because $A_j$ is small, thus we have a filler $B_j \to L_{\alpha_0} \to L$. Note that $\alpha_0$ and the morphism $A_j \to L_{\alpha_0}$ are not uniquely determined by the morphism $A_j \to L$, but the filler $B_j \to A$ is. This turns $L$ into an algebraic fibrant object which is the colimit over $(L_\alpha)_{\alpha \in I}$.\\

So far we have shown:
\begin{prop} \label{filtered}
Limits and filtered colimits in $\AlgC$ exist and are computed as the limits resp. filtered colimits of the underlying objects of $\C$. That means that the functor $U:\AlgC \to \C$ preserves limits and filtered colimits.
\end{prop}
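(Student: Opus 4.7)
The plan is to verify the limit and filtered colimit parts separately, in each case exhibiting an algebraic fibrant structure on the underlying (co)limit in $\C$ and then deducing both existence in $\AlgC$ and preservation by $U$ at a single stroke.

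For limits, I would start with a small diagram $F:\mathcal D \to \AlgC$ and set $L := \lim_{\mathcal D} U \circ F$ in $\C$, which exists since $\C$ is complete. To endow $L$ with distinguished fillers, observe that a morphism $h:A_j \to L$ is the same datum as a compatible family $h_d : A_j \to UF(d)$ indexed by $\mathcal D$. Each $h_d$ has a distinguished filler $\bar h_d : B_j \to UF(d)$, and since every structure morphism $F(d) \to F(d')$ of the diagram is a map in $\AlgC$ and hence preserves distinguished fillers, the family $(\bar h_d)$ is automatically compatible and assembles into a unique $\bar h : B_j \to L$. Declaring $\bar h$ to be the distinguished filler of $h$ turns $L$ into an algebraic fibrant object. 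The universal property in $\AlgC$ is then formal: any cone in $\AlgC$ with apex $Z$ produces, by the universal property in $\C$, a unique morphism $Z \to L$, and this morphism preserves distinguished fillers because postcomposition with each projection $L \to UF(d)$ does.

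For filtered colimits, let $(L_\alpha)_{\alpha \in I}$ be a filtered diagram in $\AlgC$ and set $L := \indlim U(L_\alpha)$ in $\C$. Given $h:A_j \to L$, the $\omega$-smallness of $A_j$ combined with filteredness of $I$ guarantees that $h$ factors through some $U(L_{\alpha_0})$, and I define the distinguished filler of $h$ as the composite $B_j \to U(L_{\alpha_0}) \to L$ using the distinguished filler in $L_{\alpha_0}$. I expect the main obstacle to be verifying independence of the chosen factorization. If $h$ also factors through $L_{\alpha_1}$, filteredness yields $\alpha_2 \geq \alpha_0, \alpha_1$, and the fact that $\mathrm{Hom}(A_j,-)$ commutes with the filtered colimit produces a further index $\alpha_3 \geq \alpha_2$ at which the two pushed-forward morphisms $A_j \to U(L_{\alpha_3})$ coincide. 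Since every transition map in the diagram lies in $\AlgC$ and therefore preserves distinguished fillers, the two a priori different fillers $B_j \to L$ have the same image, so the filler is well-defined.

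It then remains to check that $L$ with this structure is the colimit in $\AlgC$. Any cocone in $\AlgC$ with apex $Z$ yields a unique morphism $L \to Z$ in $\C$ by the colimit property; this morphism preserves distinguished fillers because, for $h:A_j \to L$ factoring through some $L_{\alpha_0}$, the filler in $L$ is the image of the filler in $L_{\alpha_0}$, and the map $L_{\alpha_0} \to Z$ is itself a morphism in $\AlgC$. Together with the limit argument, this shows that the underlying (co)limit in $\C$ already carries the requisite (co)limit structure in $\AlgC$, which is exactly the statement that $U$ preserves limits and filtered colimits.
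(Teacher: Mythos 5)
Your proof is correct and follows essentially the same route as the paper: equip the underlying limit (resp.\ filtered colimit) in $\C$ with distinguished fillers coming from the components (resp.\ from a finite stage via smallness of $A_j$) and verify the universal property in $\AlgC$ directly. You in fact supply more detail than the paper does, notably the well-definedness of the filler when a horn $A_j \to L$ factors through different stages of the filtered diagram, which the paper merely asserts.
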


\begin{corollary}\label{presentable}
If the category $\C$ is locally presentable then $\AlgC$ is also locally presentable.
\end{corollary}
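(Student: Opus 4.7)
The plan is to reduce the statement to a standard theorem in the theory of accessible categories: the category of algebras for an accessible monad on a locally presentable category is itself locally presentable (see e.g.\ Ad\'amek--Rosick\'y, \emph{Locally Presentable and Accessible Categories}). In view of the monadicity established in Proposition~\ref{monad}, which exhibits an equivalence $\AlgC \simeq \C^T$ for $T = U \circ F$, the entire question reduces to showing that the monad $T$ on $\C$ is accessible.

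To see this, I would argue as follows. The functor $F$ is a left adjoint by Proposition~\ref{adjoint} and hence preserves all small colimits. The forgetful functor $U$ preserves filtered colimits by Proposition~\ref{filtered}. Consequently $T = UF$ preserves filtered colimits, so $T$ is a finitary monad on the locally presentable category $\C$, and the cited theorem yields that $\C^T$, and hence $\AlgC$, is locally presentable.

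The only subtlety concerns the rank of accessibility. If the generating domains $A_j$ are merely $\kappa$-small for some regular cardinal $\kappa$ rather than $\omega$-small, then Proposition~\ref{filtered} should be read as asserting preservation of $\kappa$-filtered colimits, and $T$ will be $\kappa$-accessible rather than finitary. The cited theorem applies at every regular cardinal, so the conclusion is unchanged; one only has to choose a common regular cardinal that witnesses local presentability of $\C$ and with respect to which the $A_j$ are small, in order to apply the theorem cleanly. I do not anticipate any obstacle beyond this bookkeeping.
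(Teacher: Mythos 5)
Your argument is correct and is essentially the paper's own proof: monadicity from Proposition~\ref{monad}, the Ad\'amek--Rosick\'y theorem on algebras for accessible monads, and accessibility of $T=U\circ F$ via Proposition~\ref{filtered} together with $F$ preserving all colimits as a left adjoint (the paper's text misattributes this to $U$, but clearly intends $F$). Your remark on the rank of accessibility is sensible bookkeeping and changes nothing.
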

\begin{proof}
By proposition \ref{monad} we know that $\AlgC$ is the category of $T$-algebras for the monad $T = U \circ F$. The category of algebras over a monad is locally presentable if $\C$ is locally presentable and the monad is accessible, i.e. $T$ is accessible (see \cite{AR94} for the definitions and statements). In order to show this we have to show that $T$ preserves filtered colimits. But this follows from proposition \ref{filtered} and the fact that $U$ is left adjoint, hence preserves all colimits.
\end{proof}

Finally a general colimit over a diagram $F: \mathcal D \to \AlgC$ can be described using the auxiliary construction of the last section.
Let 
$$ X := \colim_{d \in \mathcal D} UF(d)$$
be the object of $\C$ which is the colimit of the underlying objects $UF(d)$. There is a morphism 
$$f_d: UF(d) \to X$$
in $\C$ for each $d \in \mathcal D$ given by the colimit inclusions. Thus we can apply proposition \ref{fact2} and obtain an algebraic fibrant object $X_\infty^f$ together with a morphism
$$
F(d) \to X \to X_\infty^f
$$
of algebraic fibrant objects.

\begin{prop}
The algebraic fibrant object $X_\infty^f$ is the colimit over $F$. 
\end{prop}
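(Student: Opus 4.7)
The plan is to verify the universal property of the colimit directly, leveraging proposition \ref{fact2} applied to the family of colimit inclusions $f_d: UF(d) \to X$. First I would show that the maps $F(d) \to X \to X_\infty^f$ assemble into a cocone in $\AlgC$ over the diagram $F$. Their underlying morphisms in $\C$ fit into a cocone because the $f_d$ do, and proposition \ref{fact2} guarantees that each of them is a morphism of algebraic fibrant objects; compatibility with the diagram morphisms $F(d) \to F(d')$ follows because the underlying compatibility already holds at the level of $X$, and $X \to X_\infty^f$ is a fixed map.

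Next I would check that this cocone is universal. Let $\{\psi_d: F(d) \to Z\}_{d\in\mathcal{D}}$ be an arbitrary cocone in $\AlgC$. Applying $U$ yields a cocone $\{U\psi_d: UF(d) \to UZ\}$ in $\C$, which by the universal property of $X = \colim UF(d)$ induces a unique morphism $\varphi: X \to UZ$ in $\C$ with $\varphi \circ f_d = U\psi_d$. Since each $U\psi_d$ is the image under $U$ of a morphism in $\AlgC$, each composite $UF(d) \to X \to UZ$ is itself a morphism of algebraic fibrant objects. The hypothesis of proposition \ref{fact2} is therefore satisfied, and it supplies a unique morphism $\varphi_\infty: X_\infty^f \to Z$ in $\AlgC$ extending $\varphi$, i.e. such that the composite $X \to X_\infty^f \to Z$ agrees with $\varphi$. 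In particular $\varphi_\infty$ makes the cocone triangles commute, because precomposing with $F(d) \to X \to X_\infty^f$ gives $F(d) \to X \xrightarrow{\varphi} Z = \psi_d$.

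For uniqueness, suppose $\varphi_\infty' : X_\infty^f \to Z$ is another morphism in $\AlgC$ with $\varphi_\infty' \circ (F(d) \to X_\infty^f) = \psi_d$ for every $d$. Then $U\varphi_\infty' \circ (X \to X_\infty^f): X \to UZ$ makes the cocone in $\C$ commute, so by uniqueness of $\varphi$ it equals $\varphi$. Hence $U\varphi_\infty'$ is an extension of $\varphi$ satisfying the same property as $\varphi_\infty$, and the uniqueness clause in proposition \ref{fact2} forces $\varphi_\infty' = \varphi_\infty$.

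I do not expect any real obstacle here; the construction in section \ref{sec:aux} was tailored precisely for this purpose, and the proof is essentially a two-step universal-property chase — first apply the colimit property in $\C$, then apply the semi-final lift property of proposition \ref{fact2}. The only point that requires a moment of care is noticing that the algebraic compatibility of each $\psi_d$ (rather than mere $\C$-compatibility) is exactly the input needed to invoke proposition \ref{fact2}, so that the extension to $X_\infty^f$ lives in $\AlgC$ rather than merely in $\C$.
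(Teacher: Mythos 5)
Your proof is correct and follows essentially the same route as the paper: factor the given cocone through $X=\colim_{d}UF(d)$ in $\C$, then invoke the semi-final lift of proposition \ref{fact2} to extend uniquely to $X_\infty^f$. You simply spell out the cocone verification and the uniqueness step in more detail than the paper's terse version.
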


\begin{proof}
We just check the universal property. Let $\{ \varphi_d: F(d) \to Z \}_{d \in \mathcal D}$
be a family of morphisms in $\AlgC$  such that for each morphism $d \to d'$ in $\mathcal D$ the diagram
\begin{equation}\label{colimDiag}
\xymatrix{
F(d) \ar[r]^{\varphi_d}\ar[d] & Z \\
F(d') \ar[ru]_{\varphi_{d'}} &
}
\end{equation}
commutes. Commutativity of \eqref{colimDiag} implies that the $\varphi$'s factors uniquely through 
$X = \colim_{d \in \mathcal D} UF(d)$ (where the factorization is by morphisms of the underlying objects of $\C$). Thus we can apply proposition \ref{fact2} and obtain a unique morphism
$$ \varphi_\infty: X_\infty^f \to Z$$
of algebraic fibrant objects.
\end{proof}

\begin{corollary}\label{complete}
$\AlgC$ is  complete and cocomplete if $\C$ is.
\end{corollary}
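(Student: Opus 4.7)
The plan is to observe that this corollary is essentially a packaging of the two previous results, so the proof amounts to reading them off in the right order.

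For completeness: First I would invoke Proposition \ref{filtered}, which asserts that the limit in $\AlgC$ of a small diagram $F\colon \mathcal D \to \AlgC$ exists whenever $\lim UF$ exists in $\C$, and is computed by endowing the latter with the obvious algebraic structure. Since $\C$ is assumed complete, every small limit $\lim UF$ exists in $\C$, so the corresponding limit exists in $\AlgC$. (One could alternatively note that the forgetful functor $U\colon \AlgC\to\C$ is monadic by Proposition \ref{monad} and hence creates limits, which gives the same conclusion.)

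For cocompleteness: I would invoke the proposition immediately preceding this corollary, which exhibits, for any small diagram $F\colon \mathcal D \to \AlgC$, a colimit of $F$ in $\AlgC$. The construction requires two ingredients: the colimit $X := \colim_{d \in \mathcal D} UF(d)$ taken in $\C$, which exists because $\C$ is cocomplete; and the semi-final lift $X_\infty^f$ of the family $\{f_d\colon UF(d)\to X\}$, which exists by Proposition \ref{fact2}. Putting these together yields the colimit $X_\infty^f$ of $F$ in $\AlgC$.

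There is no real obstacle here, since all the work has already been done in the preceding subsections; the only thing to be mildly careful about is the size of the diagrams involved. In particular, one should check that the coproducts and pushouts appearing implicitly in the construction of $X_\infty^f$ from $X$ (the transfinite process described in section \ref{sec:aux}) are all small, so that they live in the cocomplete category $\C$; this is immediate since the domains $A_j$ are $\omega$-small and the indexing sets $H, H', \ldots$ are $\hom_\C(A_j,-)$-sets, hence small.
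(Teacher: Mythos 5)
Your proof is correct and matches the paper's (implicit) argument exactly: completeness is read off from the fact that limits in $\AlgC$ are computed as limits of the underlying objects in $\C$, and cocompleteness from the preceding proposition constructing $\colim F$ as the semi-final lift $X_\infty^f$ of the colimit of underlying objects. The additional remarks on monadicity and smallness are harmless and correct.
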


In contrast to the case of filtered colimits the morphism $X \to X_\infty^f$ is in general not an isomorphism or weak equivalence. This means that colimits in $\AlgC$ cannot be computed as colimits of the underlying simplicial sets, not even up to weak equivalence. Nevertheless we can simplify the construction for pushouts along free objects. \\

In more detail: let $i: A \to B$ be a morphism in $\C$, $Y$ be an algebraic fibrant object and consider a diagram
\begin{equation}\label{push}
\xymatrix{
FA \ar[r]\ar[d]_{Fi} & Y \\
FB &
}
\end{equation}
in $\AlgC$. We will give a simple description of the pushout of this diagram. First such a diagram is the same as a diagram
$$
\xymatrix{
A \ar[r]\ar[d]_{i} & U(Y) \\
B &
}
$$
in $\C$ by adjointness of $F$ and $U$. Let 
$$X := Y\cup_A B \in \C$$
be the pushout of the last diagram.  This comes with a morphism $f: U(Y) \to X$. 
We apply proposition \ref{fact} to this morphism and obtain an algebraic fibrant object
$$X_\infty^f = (Y\cup_A B)_\infty^f $$
together with a morphism $X \to X_\infty^f$. 

\begin{prop}\label{glue}
The algebraic fibrant object $(Y\cup_A B)_\infty^f$ is the pushout of diagram \eqref{push}. 
If $i: A \to B$ is a trivial cofibration then  
$$Y \to (Y\cup_A B)_\infty^f$$  
is also a trivial cofibration.
\end{prop}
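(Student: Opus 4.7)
The plan is to verify the universal property of the pushout by reducing it to the universal property of $X_\infty^f$ from Proposition \ref{fact}, using the free-forgetful adjunction of Proposition \ref{adjoint}.

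Given a cocone in $\AlgC$ under the span $Y \leftarrow FA \to FB$, consisting of morphisms $\beta: Y \to Z$ and $\alpha: FB \to Z$, one transposes $\alpha$ along $F \dashv U$ to a morphism $\bar\alpha: B \to U(Z)$ in $\C$. The cocone condition on $FA$ translates into the statement that the two morphisms $A \rightrightarrows U(Z)$ induced via $B$ and via $U(Y)$ agree, so the ordinary pushout $X = Y \cup_A B$ in $\C$ produces a unique $\gamma: X \to U(Z)$ restricting to $U(\beta)$ and $\bar\alpha$ on the two summands. Because the composite $U(Y) \to X \to U(Z)$ equals $U(\beta)$ and $\beta$ is a morphism of algebraic fibrant objects, Proposition \ref{fact} yields a unique morphism $\gamma_\infty: X_\infty^f \to Z$ in $\AlgC$ factoring $\gamma$ through $X \to X_\infty^f$. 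A direct transposition check shows that $\gamma_\infty$ is compatible with $\alpha$ as well: both $\gamma_\infty \circ (FB \to X_\infty^f)$ and $\alpha$ transpose to $\bar\alpha$ by construction of the map $FB \to X_\infty^f$ via $B \to X \to X_\infty^f$. Uniqueness of the cocone morphism reduces to the uniqueness clause in Proposition \ref{fact}.

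For the trivial cofibration statement, assume $i: A \to B$ is a trivial cofibration. Since trivial cofibrations in any model category are closed under pushout, the induced map $f: U(Y) \to X$ is itself a trivial cofibration and hence monic by the standing hypothesis on $\C$. Proposition \ref{fact} then asserts that $X \to X_\infty^f$ is a trivial cofibration, so the composite $Y \to X \to X_\infty^f$ is a trivial cofibration as well.

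The main subtlety throughout is the bookkeeping of which morphisms live in $\AlgC$ versus in $\C$ under the adjunction transpositions; once that is tracked carefully, no new ideas beyond Proposition \ref{fact} and the adjunction $(F,U)$ are required.
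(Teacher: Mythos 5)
Your proof is correct and follows essentially the same route as the paper: both reduce the pushout's universal property to the universal property of $X_\infty^f$ from Proposition \ref{fact} combined with the adjunction $(F,U)$, and both obtain the trivial cofibration claim by factoring $Y \to (Y\cup_A B)_\infty^f$ through $Y\cup_A B$, using closure of trivial cofibrations under pushout, the standing monicity hypothesis, and the last clause of Proposition \ref{fact}. The only difference is presentational: the paper phrases the first part as a bijection of hom-sets out of $(Y\cup_A B)_\infty^f$, while you verify the cocone condition directly.
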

\begin{proof}
We first check that $(Y\cup_A B)_\infty^f$ satisfies the universal property of the pushout. Therefore let $Z$ be an algebraic fibrant object. A morphism $(Y\cup_A B)_\infty^f$ to $Z$ is then by proposition \ref{fact} the same as a morphism $g: Y\cup_A B \to Z$ in $\C$, such that the composition 
$$ Y \stackrel{f}{\to} Y\cup_A B \to Z $$
preserves distinguished fillers, i.e. is a morphism of algebraic fibrant objects. But such a morphism $g$ is the same as a morphism of algebraic fibrant objects $g_1: Y \to Z$ and a morphism $g_2: B \to U(Z)$ in $\C$ which agree on $A$. The adjunction $(F,U)$ implies that $g_2$ is the same as a morphism $FB \to Z$ which completes the proof that $(Y\cup_A B)_\infty^f$ is the pushout of diagram \eqref{push}.\\
It remains to show that $Y \to (Y\cup_A B)_\infty^f$ is a trivial cofibration if $i: A \to B$ is one. This morphism is the composition 
$$ Y \stackrel{f}\to Y\cup_A B \to (Y\cup_A B)_\infty^f. $$
The first morphism $f$ is a trivial cofibration, because it is the pushout of the trivial cofibrations $i$. Hence our general assumption implies that $f$ is monic. Thus the last part of \ref{fact} show that also
$$ Y\cup_A B \to (Y\cup_A B)_\infty^f $$
is a trivial cofibration. Hence also the composition of those two morphisms is a trivial cofibration, which completes the proof.
\end{proof}
\begin{remark}\label{remarkmono}
Assume the category $\C$ also satisfies that all cofibrations are monic. Then the same argument shows that for 
a cofibration $i: A \to B$ the morphism
$$Y \to (Y\cup_A B)_\infty^f$$  
is also a cofibration.
\end{remark}

\subsection{Model structure on \AlgCH}

In this section we want to endow $\AlgC$ with a model structure, such that the pair $(F,U)$ of functors is a Quillen equivalence. 

\begin{definition}\label{fib}
A morphism $f: X \to Y$ of algebraic fibrant objects is a weak equivalence (fibration) if the underlying morphism $U(f): U(X) \to (Y)$ is a weak equivalence (fibration) in $C$. A morphism is a cofibration of algebraic fibrant objects, if it has the LLP with respect to trivial fibrations.
\end{definition}

In order to show that this yields a model structure on $\AlgC$ we first recall that $\AlgC$ is finite complete and small cocomplete. We want to use the general transfer principle for cofibrantly generated model structures, see \cite{crans1995quillen}. 
Therefore we have to show that
\begin{enumerate}
\item\label{eins} the functor $F$ preserves small objects; 
\item\label{zwei} relative $F(I)$-cell complexes are weak equivalences in $\AlgC$, where 
$$ I:= \big\{C_i \to D_i\} $$
is the original set of generating trivial cofibrations.
\end{enumerate}
Condition \ref{eins} holds if the right adjoint $U$ preserves filtered colimits which is true in our case due to proposition \ref{filtered}. For the second condition recall that a $F(I)$-cell complex is a transfinite composition of pushouts of the form
\begin{equation*}
\xymatrix{
FC_i \ar[r]\ar[d] & Y \ar[d]_-g  \\
FD_i \ar[r]    &    \big(Y\cup_{C_i} D_i\big)_\infty^f     
}
\end{equation*}
Proposition \ref{glue} implies that $g$ is a trivial cofibration of the underlying objects of $\C$. Transfinite composition of morphisms commutes with the forgetful functor $U$ since $U$ preserves filtered colimits (see proposition \ref{filtered}). Thus the fact that a transfinite composition of trivial cofibrations in $\C$ is again a trivial cofibration shows that condition \ref{zwei} also holds. Moreover we have shown:

\begin{corollary}\label{preserves}
The functor $U$ preserves trivial cofibrations. If additionally in $\C$ all cofibrations are monic, then $U$ also preserves cofibrations. 
\end{corollary}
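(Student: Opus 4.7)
The plan is to observe that the corollary essentially packages what has already been verified in checking the hypotheses of the transfer principle, together with closure under retracts. By the transfer principle just invoked, the model structure on $\AlgC$ is cofibrantly generated with $F(I)$ as a set of generating trivial cofibrations (and $F(I')$ as generating cofibrations, where $I'$ denotes generating cofibrations of $\C$). Hence every trivial cofibration in $\AlgC$ is a retract of an $F(I)$-cell complex, i.e.\ a retract of a transfinite composition of pushouts of maps of the form $FC_i \to FD_i$ with $C_i \to D_i \in I$.

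First I would handle each building block separately. For a single pushout of an $F(I)$-generator along some morphism to an algebraic fibrant object $Y$, proposition \ref{glue} applied to the trivial cofibration $C_i \to D_i$ tells us that the pushout map $Y \to (Y \cup_{C_i} D_i)_\infty^f$ in $\AlgC$ is a trivial cofibration of the underlying objects in $\C$. Next, transfinite compositions in $\AlgC$ are filtered colimits, so proposition \ref{filtered} guarantees that $U$ takes a transfinite composition of such pushouts in $\AlgC$ to the corresponding transfinite composition in $\C$; and a transfinite composition of trivial cofibrations in $\C$ is again a trivial cofibration. Finally, $U$ preserves retracts trivially, and trivial cofibrations in $\C$ are closed under retracts. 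Chaining these three observations gives that $U$ sends every trivial cofibration of $\AlgC$ to a trivial cofibration of $\C$.

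For the second statement, suppose additionally that all cofibrations in $\C$ are monic. Then for any cofibration $i : A \to B$ in $\C$, remark \ref{remarkmono} applies and asserts that in the pushout diagram of proposition \ref{glue} the morphism $Y \to (Y \cup_A B)_\infty^f$ is a cofibration in $\C$. Running verbatim the same three-step argument as above, but now starting from the generating cofibrations $F(I')$ and using remark \ref{remarkmono} in place of proposition \ref{glue} at the pushout step, yields that $U$ takes every $F(I')$-cell complex, and hence every retract of such, to a cofibration in $\C$. Since cofibrations in $\AlgC$ are exactly the retracts of $F(I')$-cell complexes, this proves that $U$ preserves cofibrations under the extra assumption.

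There is no real obstacle here: the only thing to watch is that the preceding verification established the pushout/transfinite-composition step only for trivial cofibrations, so the two cases must be stated with the appropriate input (proposition \ref{glue} versus remark \ref{remarkmono}), and that the passage from cell complexes to arbitrary (trivial) cofibrations requires invoking retract closure, which is automatic.
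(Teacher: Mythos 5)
Your proof is correct and follows essentially the same route as the paper: decompose (trivial) cofibrations into retracts of relative cell complexes on the generating set, handle single pushouts via proposition \ref{glue} (resp.\ remark \ref{remarkmono}), and pass to transfinite compositions using that $U$ preserves filtered colimits. The only difference is that you make explicit the retract-closure step, which the paper leaves implicit in the phrase ``follows from the discussion above''.
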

\begin{proof}
The first part follows from the discussion above. Using remark \ref{remarkmono} a similar argument shows that an $F(I')$-cell complex is a cofibration where $I'$ is the set of generating cofibrations.
\end{proof}

\begin{lemma}
Unit and counit of the adjunction $(U,F)$ are weak equivalences. Thus a morphism $\varphi: X \to Y$ in $\C$ is a weak equivalence iff $F(\varphi): FX \to FY$ is a weak equivalence.
\end{lemma}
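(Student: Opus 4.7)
The plan is to deduce everything directly from Proposition \ref{adjoint} together with the 2-out-of-3 property, using the triangle identities to pass between the unit and the counit.

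First I would observe that the unit $\eta_X : X \to UF(X)$ is, by Proposition \ref{adjoint}, precisely the inclusion $X \to X_\infty$. This inclusion was constructed as the transfinite composite of the maps $X_n \to X_{n+1}$, each of which is a pushout of a coproduct of the trivial cofibrations $A_j \to B_j$. Since $\C$ is cofibrantly generated and trivial cofibrations are closed under coproducts, pushouts and transfinite composition, $\eta_X$ is a trivial cofibration in $\C$, hence in particular a weak equivalence.

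Next, for the counit $\epsilon_Y : FU(Y) \to Y$ at an algebraic fibrant object $Y$, I would invoke one of the triangle identities of the adjunction $F \dashv U$, namely
$$ U(\epsilon_Y) \circ \eta_{U(Y)} = \mathrm{id}_{U(Y)}. $$
The identity is a weak equivalence in $\C$, and the first step shows that $\eta_{U(Y)}$ is a weak equivalence. The 2-out-of-3 property in $\C$ then forces $U(\epsilon_Y)$ to be a weak equivalence. By Definition \ref{fib}, weak equivalences in $\AlgC$ are detected by $U$, so $\epsilon_Y$ itself is a weak equivalence.

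Finally, for the iff-statement about $F(\varphi)$, I would use the naturality square of $\eta$:
$$
\xymatrix{
X \ar[r]^-{\eta_X}\ar[d]_{\varphi} & UF(X) \ar[d]^{UF(\varphi)} \\
Y \ar[r]^-{\eta_Y} & UF(Y).
}
$$
Both horizontal arrows are weak equivalences in $\C$ by the first step, so by 2-out-of-3 the map $UF(\varphi)$ is a weak equivalence if and only if $\varphi$ is. Since weak equivalences in $\AlgC$ are by definition those sent to weak equivalences by $U$, this translates to the claim that $F(\varphi)$ is a weak equivalence iff $\varphi$ is. There is no real obstacle here: the only content is identifying $\eta_X$ with the map $X \to X_\infty$ of the free construction, and the rest is bookkeeping with the triangle identity and 2-out-of-3.
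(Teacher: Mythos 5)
Your proof is correct and follows essentially the same route as the paper: identify the unit with the trivial cofibration $X \to X_\infty$ from the free construction, deduce the counit case from the triangle identity together with 2-out-of-3, and get the final equivalence statement from the naturality square of the unit. The paper's own argument is just a terser version of this.
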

\begin{proof}
The unit of the adjunction is the morphism $X \to X_\infty$ which is by construction a fibrant replacement in $\C$, hence a weak equivalence. The counit is a morphism $m: FA \to A$ for an algebraic fibrant object which fits into the diagram
$$
\xymatrix{
A \ar[r]\ar[rd]_{id} &  FA \ar[d]^m \\
& A 
}
$$
Hence by 2-out-of-3 this is also weak equivalence.
\end{proof}

Altogether we have proven the main theorem of this section:
\begin{thm}\label{main}
The category $\AlgC$ admits a closed Quillen model structure with fibrations, weak equivalences and cofibrations as in definition \ref{fib}. The generating (trivial) cofibrations are the images of the generating (trivial) cofibrations under $F$. The pair of adjoint functors
$$ 
\xymatrix{
F : \C \ar@<0.3ex>[r] & \AlgC : U \ar@<0.7ex>[l] 
}$$
is a Quillen equivalence.
\end{thm}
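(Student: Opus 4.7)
The plan is to assemble the machinery already developed in the previous subsections and invoke the standard transfer principle for cofibrantly generated model structures (as in Crans' paper cited above). The hypotheses of that principle require two things: that the left adjoint $F$ preserves small objects, and that every relative $F(I)$-cell complex is a weak equivalence (in the underlying category $\C$, by our definition of weak equivalence in $\AlgC$). Both conditions have essentially been verified already in the discussion preceding the theorem, so the first step of the proof is just to cite them: smallness preservation follows because the right adjoint $U$ preserves filtered colimits (proposition \ref{filtered}), and the cell-complex condition follows by combining proposition \ref{glue} (each single pushout of an $F(I)$-map is a trivial cofibration on underlying objects) with the fact that $U$ preserves transfinite compositions (again by proposition \ref{filtered}), so that the $U$-image of a relative $F(I)$-cell complex is a transfinite composition of trivial cofibrations in $\C$, hence a trivial cofibration. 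Together with completeness and cocompleteness of $\AlgC$ (corollary \ref{complete}), this gives the desired cofibrantly generated model structure, with the generating (trivial) cofibrations being $F(I')$ and $F(I)$, where $I'$ and $I$ generate the cofibrations and trivial cofibrations of $\C$.

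Next I would check that $(F,U)$ is a Quillen adjunction. This is immediate from definition \ref{fib}: $U$ preserves fibrations and trivial fibrations on the nose, since a (trivial) fibration in $\AlgC$ is by definition a morphism whose image under $U$ is a (trivial) fibration in $\C$. Equivalently, $F$ sends generating (trivial) cofibrations to (trivial) cofibrations by construction.

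Finally, for the Quillen equivalence, I would use the preceding lemma, which says both unit and counit of $(F,U)$ are weak equivalences. Given $X \in \C$ cofibrant and $Y \in \AlgC$ fibrant, a morphism $\varphi: FX \to Y$ corresponds under the adjunction to $\tilde\varphi: X \to UY$, and the triangle identity writes $\tilde\varphi$ as the composition $X \to UFX \xrightarrow{U\varphi} UY$ where the first map is the unit, hence a weak equivalence. By 2-out-of-3, $\tilde\varphi$ is a weak equivalence in $\C$ iff $U\varphi$ is, i.e.\ iff $\varphi$ is a weak equivalence in $\AlgC$. This is the criterion for a Quillen equivalence.

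The only step that is not purely bookkeeping is the verification that relative $F(I)$-cell complexes are weak equivalences, and this is exactly where the nontrivial content of the preceding sections is used — namely, the explicit description of pushouts along free maps via the auxiliary construction $(\,\cdot\,)_\infty^f$, which is precisely tailored so that proposition \ref{glue} holds. Without that description, pushouts in $\AlgC$ would be essentially inaccessible, since they do not agree with pushouts in $\C$. So conceptually the main obstacle has already been dealt with in subsections \ref{sec:aux} and \ref{sec:colimits}, and what remains here is organizational.
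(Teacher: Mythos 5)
Your proposal is correct and follows essentially the same route as the paper: the transfer principle of Crans with the two conditions verified via proposition \ref{filtered} (smallness) and proposition \ref{glue} plus preservation of transfinite compositions (cell complexes are weak equivalences), and the Quillen equivalence deduced from the lemma that the unit and counit are weak equivalences. Your explicit spelling-out of the Quillen-equivalence criterion via the triangle identity and 2-out-of-3 is a detail the paper leaves implicit, but it is the intended argument.
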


\begin{corollary}
Each cofibrantly generated model category $\C$ where all trivial cofibrations are monic is Quillen equivalent to a model category where all objects are fibrant. In particular $\C$ is Quillen equivalent to a right proper model category.
\end{corollary}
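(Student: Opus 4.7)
The plan is to read the corollary as a direct consequence of Theorem~\ref{main}, once we observe two things about $\AlgC$: every object is fibrant, and any model category in which every object is fibrant is automatically right proper.

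First I would note that by Theorem~\ref{main} the category $\AlgC$ carries a model structure and the adjunction $(F,U)$ is a Quillen equivalence $\C \simeq \AlgC$. So the first sentence of the corollary only requires showing that every object of $\AlgC$ is fibrant. By Definition~\ref{fib}, an object $X \in \AlgC$ is fibrant iff $U(X) \to U(*)$ is a fibration in $\C$, where $*$ denotes the terminal object of $\AlgC$. Since $U$ preserves limits (Proposition~\ref{filtered}), $U(*)$ is terminal in $\C$, so fibrancy of $X$ in $\AlgC$ is equivalent to fibrancy of $U(X)$ in $\C$. But by construction the underlying object of any algebraic fibrant object is fibrant (the distinguished fillers witness the requisite lifts against $A_j \to B_j$). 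Hence every object of $\AlgC$ is fibrant.

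For the second sentence I would invoke the well-known fact that a model category in which every object is fibrant is automatically right proper (see for instance Hirschhorn, \emph{Model Categories and Their Localizations}, Corollary 13.1.3). The underlying point is that the pullback of a weak equivalence along a fibration between fibrant objects is always a weak equivalence in any model category (this is Ken Brown's lemma together with the factorization axiom applied in the fibrant world); if \emph{all} objects are fibrant, this hypothesis is automatic, which is exactly the definition of right properness.

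I do not anticipate a serious obstacle here, since the heavy lifting has already been done in Theorem~\ref{main} and Proposition~\ref{filtered}. The only mildly subtle point is the identification of fibrant objects in $\AlgC$ with objects whose underlying $\C$-object is fibrant, which uses preservation of the terminal object by $U$; the step from ``all objects fibrant'' to ``right proper'' is standard and can be cited rather than reproved.
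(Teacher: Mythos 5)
Your proposal is correct and follows essentially the same route as the paper, which simply observes that all objects of $\AlgC$ are fibrant by definition and then invokes the standard fact that such a model category is right proper. Your additional care about identifying fibrant objects of $\AlgC$ via preservation of the terminal object by $U$ is a sound elaboration of what the paper leaves implicit.
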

\begin{proof}
It is clear from the definition that all objects in $\AlgC$ are fibrant. But each model category where all objects are fibrant is right proper.
\end{proof}

As a consequence of corollary \ref{presentable} we have
\begin{proposition}\label{combinatorial}
If $\C$  is a combinatorial model category then so is $\AlgC$.
\end{proposition}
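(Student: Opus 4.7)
The plan is essentially to unpack what ``combinatorial'' means and observe that both required ingredients have already been established in the preceding sections, so the proof is a one-line assembly.

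Recall that a combinatorial model category, in the sense of Jeff Smith, is a model category whose underlying category is locally presentable and whose model structure is cofibrantly generated. Hence to prove the proposition I need to verify these two properties for $\AlgC$ under the assumption that they hold for $\C$.

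First, local presentability of $\AlgC$ is exactly the content of Corollary \ref{presentable}: since $\C$ is locally presentable and the monad $T = U\circ F$ is accessible (because $U$ preserves filtered colimits by Proposition \ref{filtered}, and $F$, as a left adjoint, preserves all colimits), the category of $T$-algebras is locally presentable, and Proposition \ref{monad} identifies this category of algebras with $\AlgC$. Second, the fact that the model structure on $\AlgC$ is cofibrantly generated is part of Theorem \ref{main}, which even gives explicit generating sets: the images under $F$ of the generating (trivial) cofibrations of $\C$.

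Combining these two observations yields the claim, so there is no hard step remaining; the work was done in Corollary \ref{presentable} and Theorem \ref{main}. The only thing one might want to double-check is that the notion of combinatorial model category being used is indeed ``locally presentable plus cofibrantly generated'' (as opposed to a variant definition), but under the standard convention the proof is immediate.
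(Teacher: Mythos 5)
Your proof is correct and matches the paper's: the paper derives this proposition directly from Corollary \ref{presentable} (local presentability of $\AlgC$) together with Theorem \ref{main} (the cofibrantly generated model structure), exactly as you do. No further comment is needed.
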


Finally we want draw another consequence
\begin{proposition}\label{fibmon}
Let $\C$ be a cofibrantly generated model category where all cofibrations are monic. Then there is an endofunctor $T: \C \to \C$ such that 
\begin{itemize}\setlength{\itemsep}{-0.5ex}
\item $T(X)$ is a fibrant replacement of $X$.
\item $T$ preserves filtered colimits.
\item $T$ preserves cofibrations and trivial cofibrations.
\item $T$ is monadic.
\end{itemize}
\end{proposition}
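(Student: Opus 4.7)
The natural candidate is the endofunctor $T := U \circ F : \C \to \C$, where $F \dashv U$ is the adjunction between $\C$ and $\AlgC$ constructed in the previous subsections. I would verify the four bullet points in turn, each by invoking results already established.

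First, for the fibrant replacement property, recall from Proposition \ref{adjoint} that the unit $X \to X_\infty = UF(X)$ is a trivial cofibration, hence in particular a weak equivalence; and $UF(X)$ is fibrant in $\C$ because it is the underlying object of an algebraic fibrant object. This gives a natural fibrant replacement $X \to T(X)$. Second, preservation of filtered colimits is a direct composition argument: $F$ is a left adjoint so it preserves all colimits, and $U$ preserves filtered colimits by Proposition \ref{filtered}; hence $T = UF$ preserves filtered colimits.

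For the third point, $F$ is left Quillen by Theorem \ref{main}, so it sends (trivial) cofibrations of $\C$ to (trivial) cofibrations of $\AlgC$. By Corollary \ref{preserves}, the forgetful functor $U$ preserves trivial cofibrations unconditionally, and preserves cofibrations under the additional hypothesis that all cofibrations in $\C$ are monic. Composing, $T = UF$ preserves cofibrations and trivial cofibrations. Finally, monadicity of $T$ is nothing but Proposition \ref{monad}: the adjunction $(F,U)$ is monadic, so $T$ is the monad of a monadic adjunction, which is what ``$T$ is monadic'' means.

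I do not expect any real obstacle: every clause is an immediate consequence of a result already proved in Sections \ref{sec:free}--\ref{sec:colimits} plus Theorem \ref{main} and its Corollary \ref{preserves}. The only subtle point is that the preservation of cofibrations in the third clause is exactly where the extra hypothesis that all cofibrations in $\C$ are monic enters; without it, we would only get preservation of trivial cofibrations.
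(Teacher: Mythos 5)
Your proof is correct and follows essentially the same route as the paper: take $T = U\circ F$, cite Proposition \ref{monad} for monadicity, Proposition \ref{filtered} plus the fact that $F$ is a left adjoint for filtered colimits, and the left-Quillen property of $F$ together with Corollary \ref{preserves} for (trivial) cofibrations. You are slightly more complete than the paper, which leaves the fibrant-replacement bullet implicit; your observation about where the monicity hypothesis enters is also accurate.
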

\begin{proof}
The functor $T := U \circ F$ is monadic by proposition \ref{monad}. It preserves filtered colimits since $F$ preserves all colimits and $U$ preserves filtered colimits by \ref{filtered}. Analogously $F$ preserves cofibrations and trivial cofibrations since it is left Quillen and $U$ preserves them by corollary \ref{preserves}. Hence $T$ preserves cofibrations and trivial cofibrations.
\end{proof}

\section{Algebraic Kan complexes}\label{algKan2}

In this section we want to apply the general construction of the last section to the case of the standard (Quillen) model structure on simplicial sets. That leads us to the notion of algebraic Kan complex. We will explain how this should be considered as an algebraic notion of an $\infty$-groupoid and give a direct comparison to topological spaces.

\subsection{Kan complexes and \inftyH -groupoids}\label{sec:kan}

Let $\sSet$ denote the category of simplicial sets. This category carries the structure of a cofibrantly generated model category where the generating trivial cofibrations are given by the horn inclusions 
$$ \{\Lambda^k(n) \to \Delta(n) \mid n \geq 1, 0 \leq k \leq n\}$$
and the generating cofibrations are given by the boundary inclusions
$$ \{\partial \Delta(n) \to \Delta(n) \mid n \geq 1\}.$$
For these statements and terminology see \cite{hovey2007model} chapter 3.
The fibrant objects, i.e. the simplicial sets $X$ having fillers for all horns $\Lambda^k(n) \to X$ are called Kan complexes. It is well known that Kan complexes could be seen as a model for $\infty$-groupoids 
(i.e. weak $\infty$-categories where all morphisms are invertible). As an illustration we will investigate the lifting conditions for the horns of the 2-simplex $\Delta(2)$.
There are three horns $\Lambda^0(2), \Lambda^1(2)$ and $\Lambda^2(2)$. First consider the inner horn $\Lambda^1(2)$. A morphism 
$$h: \Lambda^1(2) \to X $$ 
is the same as choosing two matching 1-cells $f,g \in X_1$, depicted as 
$$\xymatrix{
& b\ar[rd]^g &  \\
a \ar[ru]^f && c
}$$
Now a filler $F(h): \Delta(2) \to X$ is a 2-simplex that fills this horn:
$$\xymatrix{
& b\ar[rd]^g \ar@{=>}[d]&  \\
a \ar[ru]^f\ar[rr]_t && c
}$$
The target $t$ of this 2-cell should now be seen as a composition of $f$ and $g$. Of course the filler $F(h)$ is not unique and thus the composition of 1-cells is also not unique. Nevertheless, using the higher dimensional fillers one can show that composition defined in this way is unique up two 2-cells (that means between two composites there is always a 2-cell connecting them, which is also unique up to 3-cells...). But the lack of a fixed compositions is sometimes counterintuitive or might lead to problems working with $\infty$-groupoids. Thus the idea is to fix a filler for each pair of morphisms $(f,g)$ and refer to this as \emph{the composition} of $f$ and $g$. We give a definition of Kan complexes with fixed fillers, called algebraic Kan complexes, in the next section. \\

But let us first return to the investigation of lifting properties. We saw that the lifting against the inner horn $\Lambda^1(2)$
endows $X$ with compositions of 1-cells. Analogously one can see that lifting against higher inner horns $\Lambda^k(n)$ provides compositions of higher cells, which is a good exercise to do for $n = 3$. But we want to look at the outer horns $\Lambda^0(2)$ and $\Lambda^2(2)$. A morphism $\Lambda^0(2) \to X$ provides two morphisms $f,g \in X_1$ that fit together like this:
$$\xymatrix{
& b &  \\
a \ar[ru]^f \ar[rr]_g && c
}$$
A filler for such a diagram translates into a diagram
$$\xymatrix{
& b\ar[rd]^t\ar@{=>}[d] &  \\
a \ar[ru]^f \ar[rr]_g && c
}.$$
This means that $g$ could be seen as a composition $t \circ f$ or equivalently $t = g \circ f^{-1}$. In that way a Kan complex provides inverses an thus models $\infty$-groupoids rather than $\infty$-categories.  In our approach to $\infty$-groupoids we will also fix those fillers and thus have a choice of inverses. Later in section \ref{quasi} we will consider quasi-categories where fillers are only required for inner horns and thus there are no inverses for 1-cells.

\subsection{Algebraic Kan complexes as \inftyH -groupoids}

In this section we will give the notion of algebraic Kan complex and use the general methods developed in section \ref{allgemein} to obtain a model structure and to deduce properties for algebraic Kan complexes. As motivated in the last section, an algebraic Kan complex should have fixed fillers for all horns, and thus fixed compositions and inverses of cells.

\begin{definition} \phantomsection\label{algKan}
\begin{enumerate}
\item
An  \emph{algebraic Kan complex} is a simplicial set $X$ together with a distinguished filler for each horn in $X$. A map of algebraic Kan complexes is a map that sends distinguished fillers to distinguished fillers. The category of algebraic Kan complexes is denoted by \Alg 
\item
A morphism $f: X \to Y$ of algebraic Kan complexes is a weak equivalence (fibration) if the underlying morphism $U_A(f): U_A(X) \to U_A(Y)$ is a weak equivalence (fibration) of simplicial sets. A morphism is a cofibration of algebraic Kan complexes, if it has the LLP with respect to trivial fibrations.
\end{enumerate}
\end{definition}

The model category $\sSet$ is cofibrantly generated and the cofibrations are exactly the monomorphisms. Thus from section \ref{allgemein} we immediately have:

\begin{thm}\label{algKanMain}
\begin{enumerate}
\item The canonical forgetful functor $U_A: \Alg \to \sSet$ has a left adjoint $F_A: \sSet \to \Alg$ which is constructed by iteratively attaching $n$-simplices as fillers for all horns. 
\item Algebraic Kan complexes are precisely algebras for the monad $T:=U \circ F$ generated by this adjunction. 
\item \Alg is small complete and cocomplete. Limits and filtered colimits are computed as limits resp. colimits of the underlying simplicial sets. 
\item \Alg is a combinatorial model category with generating trivial cofibrations 
$$ F_A\Lambda^k(n) \to F_A\Delta(n) $$
and generating cofibrations 
$$ F_A\partial\Delta(n) \to F_A \Delta(n)$$
\item The pair $(F,U)$ is a Quillen equivalence. Furthermore the functor $U_A$ preserves cofibrations and trivial cofibrations.
\end{enumerate}
\end{thm}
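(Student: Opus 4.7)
The entire theorem is a direct specialization of the general machinery developed in Section \ref{allgemein}, so my plan is essentially to check that the hypotheses of that section are met for $\C = \sSet$ with the Quillen model structure, and then invoke the appropriate general results for each of the five items.

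First I would verify the standing hypothesis: $\sSet$ with the Quillen model structure is cofibrantly generated (with the horn inclusions $\Lambda^k(n) \to \Delta(n)$ as generating trivial cofibrations, as recalled in Section \ref{sec:kan}), the cofibrations are precisely the monomorphisms (so in particular every trivial cofibration is monic), and $\sSet$ is locally presentable (hence cocomplete, and the domains $\Lambda^k(n)$ are $\omega$-small). Thus the construction of $\AlgC$ from Section \ref{allgemein} applies with $J$ equal to the set of horn inclusions, and it yields precisely the category \Alg of Definition \ref{algKan}. The definition of weak equivalences, fibrations and cofibrations in Definition \ref{algKan} matches that of Definition \ref{fib}.

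With this identification, the proof proceeds item by item. Part (1) follows from Proposition \ref{adjoint}: the left adjoint $F_A$ is the functor $X \mapsto X_\infty$ constructed in Section \ref{sec:free} by the iterated pushout procedure that freely attaches a filler simplex $\Delta(n)$ along each new horn $\Lambda^k(n) \to X_m$. Part (2) is Proposition \ref{monad} applied to this adjunction. Part (3) is Corollary \ref{complete} (cocompleteness) combined with the explicit description in Proposition \ref{filtered} that limits and filtered colimits in $\AlgC$ are created by the forgetful functor. Part (4) is Theorem \ref{main} together with Proposition \ref{combinatorial}, applied to the fact that $\sSet$ is combinatorial; the generating (trivial) cofibrations are by that theorem the images under $F_A$ of the generating (trivial) cofibrations in $\sSet$, which are precisely the maps listed. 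Finally, part (5): Theorem \ref{main} already states that $(F_A, U_A)$ is a Quillen equivalence, and the preservation statement for $U_A$ follows from Corollary \ref{preserves}, whose hypothesis (all cofibrations in $\C$ are monic) holds for $\sSet$.

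There is no real obstacle; the only thing to be slightly careful about is that the generating set $J$ used in Section \ref{allgemein} need only detect fibrancy, not generate the trivial cofibrations, but here these two sets coincide, so the generating trivial cofibrations of \Alg described by Theorem \ref{main} are indeed $F_A\Lambda^k(n) \to F_A\Delta(n)$ and not some smaller or larger set. Given how cleanly everything is set up in Section \ref{allgemein}, the theorem requires nothing beyond citation of earlier results.
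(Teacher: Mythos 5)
Your proposal is correct and follows exactly the same route as the paper: after checking that $\sSet$ with the Quillen model structure satisfies the standing hypotheses of Section \ref{allgemein}, each item is obtained by citing Proposition \ref{adjoint}, Proposition \ref{monad}, Proposition \ref{filtered} with Corollary \ref{complete}, Theorem \ref{main} with Proposition \ref{combinatorial}, and Theorem \ref{main} with Corollary \ref{preserves}, respectively. Your extra remark that $J$ here coincides with the set of generating trivial cofibrations is a sensible precaution but does not change the argument.
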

\begin{proof}
1: prop. \ref{adjoint};   2: prop. \ref{monad}; 3: prop \ref{filtered} and corollary \ref{complete};  4: theorem \ref{main} and proposition \ref{combinatorial}; 5: theorem \ref{main} and corollary \ref{preserves}.
\end{proof}

Note that in contrast to $\sSet$ in this model structure on $\Alg$ each object is fibrant but not necessarily cofibrant. For example the point in $\Alg$ is not cofibrant. The cofibrant objects are exactly retracts of $F_A \partial\Delta(n) \to F_A \Delta(n)$-cell complexes. We will say some words about such cell complexes in order to give a better understanding of the cofibrations.  
Let $X$ be an algebraic Kan complex. We want to glue a n-cell to $X$ along its boundary $\partial \Delta(n)$. Formally speaking we want to compute the pushout of a diagram
$$\xymatrix{
F_A \partial \Delta(n) \ar[r]\ar[d] & X \\
F_A \Delta(n) &
}$$
where the upper morphism comes from a morphism $\partial\Delta(n) \to X$ 
of simplicial sets, which is just a combinatorial $n$-sphere in $X$. From prop \ref{glue} we know that the pushout can now be computed in two steps: first glue the $n$-cell along its boundary to $X$, i.e. form the pushout $X \cup_{\partial\Delta(n)} \Delta(n)$ in \sSet. Intuitively speaking we simply add a new $n$-cell to our $\infty$-groupoid. But now some compositions are missing, namely those of the new $n$-cell with cells of the old $\infty$-groupoid $X$. Thus we throw in freely all those compositions, i.e. form $X_\infty^f$ (see section \ref{sec:aux}). What we finally obtain is the pushout in the category $\Alg$. \\

Note first that gluing a $n$-cell not along its boundary, but along its horn works exactly the same. Now general cell complexes are just an iteration of this gluing process. The fact that filtered colimits are computed as colimits of the underlying simplicial sets means, that we can do this iteration naively and finally obtain the right algebraic Kan complex. Hence we have a very clear understanding of cofibrations and trivial cofibrations in $\Alg$. This discussion also shows that the category $\Alg$ provides the right colimits, whereas colimits of (ordinary) Kan complexes might no longer be Kan complexes and thus are not the correct colimit of $\infty$-groupoids.

\subsection{The homotopy hypothesis}

The homotopy hypothesis is informally speaking the idea that $\infty$-groupoids are the same as topological spaces. Here we propose algebraic Kan complexes as a model for $\infty$-groupoids. Therefore we should show that they are equivalent to topological spaces. More precisely we want to prove that the model categories are Quillen equivalent. As model categories are a way to encode the $(\infty,1)$-category of $\infty$-groupoids, this could be regarded as proving the homotopy hypothesis for our model of $\infty$-groupoids. \\

First of all, it is a classical result of Quillen, that the (standard) model categories of topological spaces and simplicial sets are equivalent. The adjoint functors which form the Quillen pair are
$$ 
\xymatrix{
|...| : \sSet \ar@<0.3ex>[r] & \Top : \Sing \ar@<0.7ex>[l] 
}$$
where the left adjoint $|...|$ is the geometric realization functor and the right adjoint $\Sing$ is the singular complex functor. We could now argue, using this result, that the category of algebraic Kan complexes is Quillen equivalent to simplicial sets and thus is equivalent to topological spaces. This is perfectly fine on the level of $(\infty,1)$-categories. But in this way we will not obtain a direct Quillen equivalence between algebraic Kan complexes and topological spaces, because the Quillen equivalences cannot be composed. Instead we will give a direct Quillen equivalence
$$ 
\xymatrix{
|...|_r : \Alg \ar@<0.3ex>[r] & \Top : \ASing \ar@<0.7ex>[l] 
}$$
where the left adjoint $|...|_r$ is called reduced geometric realization and the right adjoint $\ASing$ is called fundamental $\infty$-groupoid. This will render the diagram
\begin{equation}\label{triangle}
\xymatrix{
& \Alg\ar@<0.3ex>[rdd]^-{|...|_r}\ar@<0.7ex>[ldd]^-{U_A} & \\
& & \\
\sSet \ar@<-0.7ex>[rr]_-{|...|}\ar@<0.3ex>[ruu]^-{F_A}&  &\Top \ar@<-0.3ex>[ll]_-{\Sing} \ar@<0.7ex>[luu]^-{\ASing} 
}\end{equation}
commutative (more precisely: the inner and the outer triangle). \\

Let us start by describing the fundamental $\infty$-functor $\ASing: \Top \to \Alg$. For a topological space $M$ the ordinary singular complex is by definition the simplicial set $\Sing(M)$ with 
$$ \Sing(M)_n = \hom_{\Top}\big( |\Delta(n)| , M\big) $$
where $| \Delta(n) |$ denotes the topological $n$-simplex 
$$ |\Delta(n)| = \big\{(x_0,\ldots,x_n) \in \mathbb{R}_{\geq 0}^{n+1} \mid \sum x_i = 1\big\}.$$ 
In order to make the diagram $\eqref{triangle}$ commutative, the underlying simplicial set of $\ASing(M)$ has to be the simplicial set $\Sing(M)$. Now to endow $\Sing(M)$ with the structure of an algebraic Kan complex, we have to give distinguished fillers for all horns
$$ \Lambda^k(n) \to \Sing(M).$$
But due to the fact that $\Sing$ is right adjoint to $|...|$ such a horn is the same as a morphism 
$$ h: |\Lambda^k(n)| \to M$$
of topological spaces. It is easy to see that $|\Lambda^k(n)|$ is (homeomorphic to) the naive horn
$$ |\Lambda^k(n)| = \bigcup_{i \neq k} \big\{(x_0,\ldots,x_n) \in |\Delta(n)|  \mid x_i = 0\big\} $$
which is the union of all but one faces of the simplex $|\Delta(n)|$. From the geometric point of view, it is clear that there are (linear) retractions 
$$R(n,k):  |\Delta(n)| \to |\Lambda^k(n)|.$$
We will not give an explicit formula for the $R(n,k)$ because that will not give more insights, but in principle that can be easily done. We use these retractions to obtain morphisms
$$ |\Delta(n)| \stackrel{R(n,k)}{\longrightarrow} |\Lambda^k(n)| \stackrel{h} \to M$$
which by adjointness are fillers $\Delta(n) \to \Sing(M)$ for horns in $\Sing(M)$. We denote the resulting algebraic Kan complex by $\ASing(M)$. Furthermore this assignment is obviously functorial in $M$ such that we finally have defined the functor
$$\ASing: \Top \to \Alg$$
\begin{remark}
The construction of the functor $\ASing$ depends on the choice of retracts $R(n,k)$ we have made. Every other choice would lead to a different (but of course weakly equivalent) algebraic Kan complex $\ASing(M)$. This choice parameterizes the composition of paths or higher cells in the path $\infty$-groupoid. 
\end{remark}

Now let us turn towards the reduced geometric realization functor
$$ |\quad|_r: \Alg \to \Top.$$
So let $X$ be an algebraic Kan complex. First of all, consider the geometric realization 
$ |U_A(X)|$ of the underlying simplicial set. The distinguished fillers $\Delta(n) \to X$ provide 
n-simplices in $|U_A(X)|$ which are fillers for the horns $|\Lambda^k(n)| \to |U_A(X)|$. But the composite
$$|\Delta(n)| \stackrel{R(n,k)}{\longrightarrow} |\Lambda^k(n)| \to |U_A(X)|$$
provides another filler. Therefore we define the reduced geometric realization $|X|_r$ as the space
where those two different fillers for the same horn have been identified. Formally we have
$$ |X|_r := \CoEq\Big(\bigsqcup |\Delta(n)| \rightrightarrows |U_A(X)|\Big) $$
where the disjoint union is taken over all horns $\Lambda^k(n) \to X$. With this definition we have:

\begin{prop}
The functor $|\quad|_r : \Alg \to \Top$ is left adjoint to $\ASing$.
\end{prop}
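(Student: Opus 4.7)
The plan is to exhibit the required natural bijection
$$\hom_{\Top}\bigl(|X|_r,M\bigr) \;\cong\; \hom_{\Alg}\bigl(X,\ASing(M)\bigr)$$
by combining the classical adjunction $|\cdot| \dashv \Sing$ with the universal property of the coequalizer defining $|X|_r$.

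First I would unpack the left-hand side. By the defining coequalizer
$$|X|_r = \CoEq\Bigl(\,\bigsqcup_{h}\,|\Delta(n)| \rightrightarrows |U_A(X)|\,\Bigr),$$
a continuous map $|X|_r \to M$ is the same as a continuous map $g: |U_A(X)| \to M$ such that for every horn $h: \Lambda^k(n) \to X$, the two composites
$$|\Delta(n)| \xrightarrow{\;|F(h)|\;} |U_A(X)| \xrightarrow{\;g\;} M
\qquad\text{and}\qquad
|\Delta(n)| \xrightarrow{R(n,k)} |\Lambda^k(n)| \xrightarrow{|h|} |U_A(X)| \xrightarrow{\;g\;} M$$
agree, where $F(h)$ is the distinguished filler of $h$ in $X$. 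By the Quillen adjunction $|\cdot| \dashv \Sing$, continuous maps $g: |U_A(X)| \to M$ correspond bijectively to simplicial maps $\tilde g: U_A(X) \to \Sing(M) = U_A(\ASing(M))$.

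Next I would translate the coequalizer condition across this adjunction. Under the isomorphism $\hom_{\Top}(|\Delta(n)|,M) \cong \Sing(M)_n$, the first composite corresponds to the $n$-simplex $\tilde g \circ F(h) \in \Sing(M)_n$, i.e.\ the image under $\tilde g$ of the distinguished filler of $h$ in $X$. The second composite, by the very definition of the algebraic Kan complex structure on $\ASing(M)$, corresponds to the distinguished filler in $\ASing(M)$ of the horn $\tilde g \circ h : \Lambda^k(n) \to \Sing(M)$. Hence the coequalizer condition on $g$ is exactly the condition that $\tilde g$ carry the distinguished filler of each horn $h$ of $X$ to the distinguished filler of its image horn in $\ASing(M)$. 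That is precisely the condition for $\tilde g$ to be a morphism in $\Alg$, so it defines a map $X \to \ASing(M)$.

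This establishes the bijection $\hom_{\Top}(|X|_r,M) \cong \hom_{\Alg}(X,\ASing(M))$. Naturality in $M$ is immediate from naturality of $|\cdot| \dashv \Sing$, and naturality in $X$ follows because a morphism $X' \to X$ in $\Alg$ preserves distinguished fillers and is therefore compatible with the coequalizer presentation. The only subtle point is the verification in the middle step that the two composites really correspond, under the adjunction isomorphism, to the two $n$-simplices whose equality is the preservation-of-distinguished-fillers condition; this is a matter of chasing the definitions of $\ASing$ and $|\cdot|_r$, and is where one must be most careful, but it follows directly from the formula used to define the distinguished fillers of $\ASing(M)$ via the retracts $R(n,k)$.
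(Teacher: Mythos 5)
Your proposal is correct and follows essentially the same route as the paper: unwind the coequalizer defining $|X|_r$, transport the resulting compatibility condition across the classical adjunction $|\cdot|\dashv\Sing$, and observe that it becomes exactly the preservation of distinguished fillers, using that the fillers of $\ASing(M)$ are defined via the retracts $R(n,k)$. Your write-up is in fact somewhat more explicit than the paper's about the middle translation step and about naturality.
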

\begin{proof}
Let $X$ be an algebraic Kan complex, $M$ a topological
 space and $f: |X|_r \to M$ be a continuous map. By construction of $|X|_r$ as a coequalizer this is the same as a continuous map 
$\tilde f:|U_A(X)| \to M$ such that for each horn $h: \Lambda^k(n)\to X$ with filler $F(h): \Delta(n) \to X$ the two maps 

$$ |\Delta(n)| \stackrel{R(n,k)}{\longrightarrow} |\Lambda^k(n)| \stackrel{|h|}{\longrightarrow} |U_A(X)|\stackrel{\tilde f}{\to} M$$
and 
$$ |\Delta(n)| \stackrel{|F(h)|}{\longrightarrow} |U_A(X)|\stackrel{\tilde f}{\to} M$$
agree. Using the adjunction $(|...|,\Sing)$ we see that this is the same as a morphism 
$$ \tilde{\tilde f}: U_A(X) \to \Sing(M)$$
such that the images of distinguished filler diagrams in $X$ are sent to the fillers in $\Sing(M)$ obtained by using the retractions $R(n,k)$. 
That means $\tilde{\tilde f}$ is a morphism of algebraic Kan complexes between $X$ and $\ASing(M)$. 
\end{proof}

\begin{corollary}
Diagram \eqref{triangle} commutes (up to natural isomorphism).
\end{corollary}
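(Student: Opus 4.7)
The plan is to handle the two triangles separately, exploiting that the diagram consists of two adjoint triples $(F_A \dashv U_A)$, $(|\cdot| \dashv \Sing)$, $(|\cdot|_r \dashv \ASing)$, with the left adjoints $F_A, |\cdot|, |\cdot|_r$ forming the inner triangle and the right adjoints $U_A, \Sing, \ASing$ forming the outer one.

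First I would dispose of the outer triangle. Unwinding the definition of $\ASing$ from the previous subsection, the underlying simplicial set of $\ASing(M)$ is literally $\Sing(M)$; the functor $\ASing$ was obtained by merely attaching the distinguished fillers coming from the retractions $R(n,k)$ to $\Sing(M)$. In particular the forgetful functor $U_A$ applied to $\ASing(M)$ gives back $\Sing(M)$ on the nose, and naturality in $M$ is immediate since a continuous map $M \to M'$ induces the same map of underlying simplicial sets after or before passing through $\ASing$. Thus $U_A \circ \ASing = \Sing$ as functors, which is the outer triangle.

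For the inner triangle I would argue by adjointness and the Yoneda lemma. Fix $X \in \sSet$ and a topological space $M$. Using successively the adjunctions $(|\cdot|_r \dashv \ASing)$, $(F_A \dashv U_A)$, the already established identity $U_A \circ \ASing = \Sing$, and $(|\cdot| \dashv \Sing)$, one obtains a chain of natural bijections
\[
\hom_{\Top}\bigl(|F_A X|_r, M\bigr) \cong \hom_{\Alg}\bigl(F_A X, \ASing M\bigr) \cong \hom_{\sSet}\bigl(X, U_A \ASing M\bigr) \cong \hom_{\sSet}(X, \Sing M) \cong \hom_{\Top}(|X|, M),
\]
all natural in $M$ (and also in $X$). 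By the Yoneda lemma applied to functors $\Top \to \text{Set}$ represented by objects of $\Top$, this produces a natural isomorphism $|F_A X|_r \cong |X|$, which is the inner triangle.

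No step is really an obstacle here, since the outer triangle holds by construction and the inner one is a formal consequence via Yoneda; the only mild care needed is to check that the chain of natural isomorphisms above is indeed natural in $M$ as a functor $\Top \to \text{Set}$, but each bijection is one half of an adjunction or an identity of functors, so this is automatic. If one prefers, one can also verify the inner triangle directly by inspecting the explicit construction of $|F_A X|_r$ as a coequalizer: the pushouts defining $F_A X$ glue on simplices $\Delta(n)$ along horns, whose geometric realizations are killed again by the coequalizer identifying $|\Delta(n)| \to |U_A F_A X|$ with the composite $|\Delta(n)| \xrightarrow{R(n,k)} |\Lambda^k(n)| \to |U_A F_A X|$, so that $|F_A X|_r$ collapses back to $|X|$ after the identifications.
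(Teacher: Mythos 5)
Your proof is correct and follows essentially the same route as the paper: the right-adjoint triangle $U_A\circ\ASing=\Sing$ holds by construction, and the left-adjoint triangle $|{\cdot}|_r\circ F_A\cong|{\cdot}|$ follows because both sides are left adjoint to $\Sing$ (your Yoneda chain is just the standard proof of uniqueness of left adjoints, which the paper invokes directly). The only cosmetic difference is that you label the two triangles ``inner''/``outer'' oppositely to the paper, which is immaterial.
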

\begin{proof}
The inner triangle commutes by construction of $\ASing$. For commutativity of the outer we have to show that $|...|_r \circ F_A \cong |...|$. From the fact that $|...|_r$ is left adjoint to $\ASing$ and $F_A$ is left adjoint to $U_A$ we deduce that $|...|_r \circ F_A$ is left adjoint to $U_A \circ \ASing$. By commutativity of the inner triangle the latter is equal to $\Sing$. That means that  $|...|_r \circ F_A$ and $|...|$ are left adjoint to $\Sing$ and thus are naturally isomorphic.
\end{proof}
\begin{corollary}\label{homhyp}
The pair $\big(|\quad|_r,\ASing\big)$ is a Quillen equivalence.
\end{corollary}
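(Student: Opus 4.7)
The plan is to establish the Quillen equivalence in two stages: first verify that $(|\quad|_r, \ASing)$ is a Quillen adjunction, and then deduce that it is a Quillen equivalence by a 2-out-of-3 argument using the commutativity of diagram \eqref{triangle} together with the two Quillen equivalences already at hand, namely the classical $(|\quad|, \Sing)$ between $\sSet$ and $\Top$ and the pair $(F_A, U_A)$ from Theorem \ref{algKanMain}.

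For the Quillen adjunction step, I would invoke the standard criterion for cofibrantly generated model categories: a left adjoint is left Quillen provided it sends some chosen set of generating cofibrations to cofibrations and generating trivial cofibrations to trivial cofibrations. By Theorem \ref{algKanMain}.4 the generating cofibrations of $\Alg$ are the morphisms $F_A \partial\Delta(n) \to F_A \Delta(n)$ and the generating trivial cofibrations are the morphisms $F_A \Lambda^k(n) \to F_A \Delta(n)$. The commutativity of the outer triangle in \eqref{triangle}, i.e.\ the natural isomorphism $|\quad|_r \circ F_A \cong |\quad|$ established in the preceding corollary, identifies the images of these maps under $|\quad|_r$ with $|\partial\Delta(n)| \to |\Delta(n)|$ and $|\Lambda^k(n)| \to |\Delta(n)|$ respectively, both of which are (trivial) cofibrations in $\Top$ since $|\quad|$ is left Quillen for the classical adjunction. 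Hence $|\quad|_r$ is left Quillen.

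To upgrade the Quillen adjunction to a Quillen equivalence, I would pass to the induced functors on homotopy categories. The natural isomorphism $|\quad|_r \circ F_A \cong |\quad|$ descends to a natural isomorphism $\mathbf{L}|\quad|_r \circ \mathbf{L}F_A \cong \mathbf{L}|\quad|$ of total left-derived functors $\mathrm{Ho}(\sSet) \to \mathrm{Ho}(\Top)$. Quillen's classical theorem makes $\mathbf{L}|\quad|$ an equivalence of categories, while Theorem \ref{algKanMain}.5 makes $\mathbf{L}F_A$ an equivalence. Applying 2-out-of-3 for equivalences of categories (if $G \circ F$ and $F$ are equivalences, then so is $G$), $\mathbf{L}|\quad|_r$ is therefore an equivalence, which is precisely the assertion that $(|\quad|_r, \ASing)$ is a Quillen equivalence. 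The whole argument is essentially formal given the structure already in place; the only substantive content is the identification of the generators of $\Alg$ with images under $F_A$, which is already settled, so I do not anticipate any genuine obstacle — the main thing to be careful about is that the natural isomorphism of left adjoints really does descend cleanly to derived functors, which it does because all three model categories in sight have the relevant objects cofibrant enough (for $\sSet$ every object is cofibrant).
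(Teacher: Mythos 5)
Your proof is correct and follows essentially the same route as the paper's: the paper likewise deduces the result from the two known Quillen equivalences $(|\quad|,\Sing)$ and $(F_A,U_A)$ by applying the 2-out-of-3 property for Quillen equivalences to the commutative triangle \eqref{triangle}. Your explicit check that $(|\quad|_r,\ASing)$ is a Quillen adjunction (via the generating (trivial) cofibrations and the isomorphism $|\quad|_r\circ F_A\cong|\quad|$) is a preliminary the paper leaves implicit, but it does not change the substance of the argument.
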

\begin{proof}
We already know that $(|...|,\Sing)$ and $(F_A,U_A)$ are Quillen equivalences. By the 2-out-of-3 property for Quillen equivalences it follows that $(|...|_r,\ASing)$ is also a Quillen equivalence.
\end{proof}

\section{Algebraic quasi-categories}\label{quasi}

In this section we want to apply the general principle to the Joyal model structure on simplicial sets. Thereby we are led to introduce the concept of algebraic quasi-category as an algebraic model for $(\infty,1)$-categories. Finally we will relate algebraic quasi-categories to algebraic Kan complexes.

\subsection{Quasi-categories as (\inftyH,1)-categories}\label{sec:quasi}

The category $\sSet$ carries another model structure besides the Quillen model structure (see \cite{JoyalBarcelona}, \cite{lurie2006higher}). This second model structure is called the Joyal model structure. Unfortunately it is more complicated than the Quillen structure, but it is also cofibrantly generated. The cofibrations are the same as in the Quillen structure and thus the boundary inclusions
$$ \partial \Delta(n) \to \Delta(n). $$
are a set of generating cofibrations. But there is no known description of a set of generating trivial cofibrations, although it is known that such a set exists. The weak equivalences in this model structure are called categorical equivalences or quasi-equivalences. \\

The fibrant objects in this model structure are called quasi-categories. These are the simplicial sets $X$ which have the right lifting property against all inner horns
$$ \Lambda^k(n) \to \Delta(n) $$
for $n \geq 2$ and $0 < k < n$. We described in section \ref{sec:kan} how these lifting conditions could be seen as providing compositions of cells. The fact that we only have lifting conditions against inner horns thus means that we do not have inverses to 1-cells. A more precise treatment of these lifting properties shows that we still have inverses for $n$-cells with $n \geq 2$. That means that quasi-categories are a model for $(\infty,1)$-categories, i.e. $\infty$-categories where all $n$-morphisms for $n \geq 2$ are invertible. And in fact there has been much work providing evidence that this is an appropriate model for $(\infty,1)$-categories. See \cite{bergner2006survey} for a good introduction. \\

But as in the case of Kan complexes it is desirable to have a more algebraic model where especially compositions of morphisms are not only guaranteed to exist but are specified. We will do this by applying our general construction from section \ref{allgemein} to quasi-categories, as we did for Kan complexes in section \ref{algKan2}. 

\subsection{Algebraic quasi-categories}\label{algqdef}

Let $J$ be the set of inner horn inclusions
$$
\Lambda^k(n) \to \Delta(n)
$$
which are trivial cofibrations in the Joyal model structure. Using this set of morphisms we follow the general pattern from section \ref{allgemein}:
\begin{definition} 
An \emph{algebraic quasi-category} is a simplicial set $X$ together with a distinguished filler for each inner horn in $X$. A map of quasi-categories is a map that sends distinguished fillers to distinguished fillers. We denote the category of algebraic quasi-categories by \AlgQ.
\end{definition}
\begin{thm}
\begin{enumerate}
\item The canonical forgetful functor $U_Q: \AlgQ \to \sSet$ obtains a left adjoint $F_Q: \sSet \to \AlgQ$ which is constructed by iteratively attaching $n$-simplices as fillers for all inner horns. 
\item Algebraic quasi-categories are algebras for the monad $T_Q:=U_Q \circ F_Q$ generated by this adjunction. 
\item \Alg is small complete and cocomplete. Limits and filtered colimits are computed as limits resp. colimits of the underlying simplicial sets. 
\end{enumerate}
\end{thm}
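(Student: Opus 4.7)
The plan is to verify that the Joyal model structure together with the set $J$ of inner horn inclusions satisfies the blanket hypotheses of section \ref{allgemein}, and then to invoke the three relevant general results directly.

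First I would check each hypothesis in turn. The Joyal model structure on $\sSet$ is cofibrantly generated, as recalled in section \ref{sec:quasi}. Its cofibrations coincide with the Quillen cofibrations, namely the monomorphisms; in particular every trivial cofibration is monic, which is the one standing assumption of section \ref{allgemein}. The inner horn inclusions $\Lambda^k(n) \to \Delta(n)$ with $n\geq 2$ and $0<k<n$ are trivial cofibrations in the Joyal structure, and by definition the fibrant objects (the quasi-categories) are precisely those simplicial sets that admit a filler against every element of $J$. Hence $J$ is a legitimate choice of detecting set for fibrancy in the sense of section \ref{allgemein}. Finally, the domains $\Lambda^k(n)$ are finite simplicial sets and in particular $\omega$-small, so the small object argument used to construct $F_Q$ applies.

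With these hypotheses in place, the three claims follow immediately. For part (1), proposition \ref{adjoint} (applied to $\C = \sSet$ with the Joyal structure and the above $J$) produces the left adjoint $F_Q$ to $U_Q$, and its construction is exactly the iterative free attachment of $n$-simplices along inner horns described in section \ref{sec:free}. For part (2), proposition \ref{monad} gives that the adjunction $F_Q \dashv U_Q$ is monadic, so that the category $\AlgQ$ is equivalent to the category of $T_Q$-algebras, where $T_Q = U_Q \circ F_Q$. For part (3), proposition \ref{filtered} together with corollary \ref{complete} shows that $\AlgQ$ is complete and cocomplete and that $U_Q$ preserves limits and filtered colimits, which is exactly the claim about how these (co)limits are computed on underlying simplicial sets.

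There is really no obstacle here beyond bookkeeping: the theorem is a pure specialization, and all work has already been done in section \ref{allgemein}. The only point worth highlighting is the verification that $J$ detects fibrancy; but this is nothing more than the definition of a quasi-category, so no further argument is required.
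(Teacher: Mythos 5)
Your proposal is correct and follows exactly the paper's own route: the theorem is a direct specialization of Proposition \ref{adjoint}, Proposition \ref{monad}, Proposition \ref{filtered} and Corollary \ref{complete} to the Joyal model structure with $J$ the set of inner horn inclusions. Your explicit verification of the standing hypotheses of section \ref{allgemein} (trivial cofibrations monic, $J$ detecting fibrancy, smallness of the $\Lambda^k(n)$) is a welcome bit of extra care that the paper only handles implicitly in the surrounding discussion.
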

\begin{proof}
1: theorem \ref{adjoint};   2: theorem \ref{monad}; 3: theorem \ref{filtered} and corollary \ref{complete}.
\end{proof}
Additionally we have the model structure on $\AlgQ$:
\begin{definition}\label{modelQuasi}
A morphism $f: X \to Y$ of algebraic quasi-categories is a weak equivalence (fibration) if the underlying morphism $U_Q(f): U(X) \to (Y)$ is a categorical equivalence (fibration) in the Joyal model structure. A morphism is a cofibration of algebraic quasi-categories, if it has the LLP with respect to trivial fibrations.
\end{definition}

Now according to theorem \ref{main} this defines a cofibrantly generated model structure on $\AlgQ$. One of the major advantages of this new model structure is that we can explicitly write down a set of generating trivial cofibrations. This follows from the fact that in the Joyal model structure a morphism between fibrant objects, i.e. quasi-categories, is a fibration iff it has the LLP with respect to inner horn inclusions
$$ \Lambda^k(n) \to \Delta(n) $$
and the inclusion 
$$ pt \to I$$
of an object in the interval groupoid $I$. By definition $I$ is the nerve of the groupoid with two objects and an isomorphism between them (see \cite{JoyalBarcelona}, Prop. 4.3.2). Thus we have:
\begin{thm}\label{AlgQMain}
\item $\AlgQ$ is a combinatorial model category with generating trivial cofibrations 
$$ F_Q\Lambda^k(n) \to F_Q\Delta(n) \qquad \text{ for }1 < k < n \qquad\qquad  F_Q pt \to F_QI$$
and generating cofibrations 
$$ F_Q\partial\Delta(n) \to F_Q \Delta(n)$$
\item The pair $(F_Q,U_Q)$ is a Quillen equivalence. Furthermore the functor $U_Q$ preserves cofibrations and trivial cofibrations.
\end{thm}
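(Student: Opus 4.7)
The plan is to derive this theorem from the general machinery of section \ref{allgemein}, with the only genuinely new input being an explicit set of generating trivial cofibrations. Part 2 is essentially immediate: theorem \ref{main} gives the Quillen equivalence $(F_Q,U_Q)$, and preservation of cofibrations by $U_Q$ follows from corollary \ref{preserves}, since in the Joyal model structure all cofibrations are monomorphisms (the cofibrations coincide with those of the Quillen structure). The substantive task is therefore Part 1, and within Part 1 the description of the generating cofibrations is again direct from theorem \ref{main}: applying $F_Q$ to the boundary inclusions $\partial\Delta(n)\to\Delta(n)$, which generate the cofibrations of the Joyal structure, yields the set $F_Q\partial\Delta(n)\to F_Q\Delta(n)$. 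That $\AlgQ$ is combinatorial then follows from corollary \ref{presentable} together with these explicit small generating sets.

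The interesting step is the identification of the generating trivial cofibrations. The strategy exploits the fact that in $\AlgQ$ every object is fibrant, so any morphism is automatically a morphism between fibrant objects. Given $f:X\to Y$ in $\AlgQ$, the underlying map $U_Q(f)$ is therefore a morphism between quasi-categories. Invoking the classical characterization in the Joyal structure (cf.\ \cite{JoyalBarcelona}, Prop.\ 4.3.2)---that a morphism between quasi-categories is a Joyal fibration iff it has the RLP against the inner horn inclusions $\Lambda^k(n)\to\Delta(n)$ for $0<k<n$ together with $pt\to I$---and combining this with definition \ref{modelQuasi} and the adjunction $(F_Q,U_Q)$, we obtain that $f$ is a fibration in $\AlgQ$ iff it has the RLP against the proposed $F_Q$-images. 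That the proposed maps are themselves trivial cofibrations is clear: the original maps are trivial cofibrations in the Joyal structure, and $F_Q$ is left Quillen by theorem \ref{main}. Hence the proposed set does generate the trivial cofibrations.

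The step I expect to require the most care is the invocation of the Joyal characterization of fibrations between quasi-categories: this is an external input and must be cited precisely, and one should check that it indeed applies to arbitrary morphisms of quasi-categories rather than only to specific fibrant replacements. Beyond this the argument is routine: both generating sets are small (they consist of maps with $\omega$-small domains in $\sSet$, hence with small domains in $\AlgQ$ by proposition \ref{filtered}), so the small object argument available for $\AlgQ$ through theorem \ref{main} applies uniformly, and the remaining verifications reduce to unwinding the adjunction $(F_Q,U_Q)$.
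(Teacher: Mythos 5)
Your proposal is correct and follows essentially the same route as the paper: parts 2 and the generating cofibrations come straight from theorem \ref{main}, corollary \ref{preserves} and proposition \ref{combinatorial}, and the generating trivial cofibrations are identified exactly as you do, by noting that every object of $\AlgQ$ is fibrant, invoking Joyal's characterization of fibrations between quasi-categories via the inner horns and $pt \to I$, and transferring the lifting property across the adjunction $(F_Q,U_Q)$. Your added remarks (that the proposed maps are indeed trivial cofibrations because $F_Q$ is left Quillen, and that the domains are small) are correct and only make explicit what the paper leaves implicit.
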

\begin{proof}
Theorem \ref{main} and proposition \ref{combinatorial} show that $\AlgQ$ is a combinatorial model category with the given  generating cofibrations and that the pair $(F_Q,U_Q)$ is a Quillen equivalence. From corollary \ref{preserves} we know that $U_Q$ preserves trivial cofibrations. It only remains to show that the given set of morphisms is a set of generating trivial cofibrations. \\
We show that $f:X \to Y$ is a fibration in $\AlgQ$ if it has the RLP with respect to the given morphisms. By definition $f$ is a fibration iff $U_Q(f)$ is a fibration in the Joyal model structure. Since $U_Q(X)$ and $U_Q(Y)$ are quasi-categories, this is the case if $U_Q(f)$ has the RLP with respect to inner horn inclusions and $pt \to I$. Using the fact that $F_Q$ is left adjoint to $U_Q$ we see that $f$ is a fibration in $\AlgQ$ iff it has the RLP with respect to the given set of morphisms.
\end{proof}

\subsection{Groupoidification}

In this section we want to investigate how the (model) categories $\Alg$ and $\AlgQ$ are related to each other. Remember that objects in both of them are simplicial sets with extra structure. In the case of $\Alg$ we have fixed fillers for all horn inclusions and in the case of $\AlgQ$ we only have fixed fillers for inner horn inclusion. This shows that we have a canonical forgetful functor
$$ V: \Alg\to \AlgQ$$
which forgets the fillers for the outer horns. We will construct a left adjoint 
$$G: \AlgQ \to \Alg$$
called \emph{groupoidification} and show that the pair $(G,V)$ forms a Quillen adjunction (not a Quillen equivalence!). This is the algebraic analogue of the fact that the Quillen model structure on $\sSet$ is a left Bousfield localization of the Joyal model structure. More precisely we have a commuting square

\begin{equation}\label{square}
\xymatrix{
\Alg\ar@<0.7ex>[rr]^-{V} \ar@<-0.7ex>[d]_{U_A}              &&\AlgQ \ar@<0.3ex>[ll]^-{G} \ar@<0.3ex>[d]^{U_Q} \\
\sSet_Q \ar@<-0.7ex>[rr]_-{Id} \ar@<-0.3ex>[u]_{F_A}           && \sSet_J \ar@<-0.3ex>[ll]_-{Id} 
\ar@<0.7ex>[u]^{F_Q}
}
\end{equation}
of Quillen adjunctions, where $\sSet_J$ denotes the category of simplicial sets with the Joyal model structure and $\sSet_Q$ with the Quillen model structure. Here the inner and the outer squares commute (up to natural isomorphism).\\

Now let $X$ be an algebraic quasi-category. We already have fixed fillers for inner horns in $X$, i.e. morphisms $\Lambda^k(n) \to X$ 
with $0<k<n$. In order to build an algebraic Kan complex 
out of $X$ we will freely add fillers for outer horns in $X$, 
i.e. morphisms $\Lambda^k(n) \to X$ with $k=0$ or $k=n$. 
The construction is much the same as the construction from section \ref{sec:free} and we only sketch it. Let $X_1$ be the pushout
$$
\xymatrix{
\bigsqcup \Lambda^k(n) \ar[r] \ar[d] & X \ar[d] \\
\bigsqcup \Delta(n) \ar[r] & X_1
}
$$
where the colimit is taken over all outer horns in $X$. The next step $X_2$ is obtained by gluing $n$-cells $\Delta(n)$ along outer horns $\Lambda^k(n) \to X_1$ that do not factor through $X$. We proceed like this and finally put
$$ G(X) := \indlim(X \to X_1 \to X_2 \to \ldots).$$
\begin{prop}
The functor $G: \AlgQ \to \Alg$ is left adjoint to $V$ and the square \eqref{square} commutes.
\end{prop}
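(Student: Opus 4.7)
The plan is to mirror the proof of Proposition~\ref{adjoint} almost verbatim, since the construction of $G$ is formally identical to the free algebraic fibrant object functor, only restricted to \emph{outer} horns and applied to objects that already have distinguished fillers for the inner horns. To establish the adjunction $G \dashv V$, I take the natural map $\eta_X: X \to G(X)$ coming from the transfinite tower $X \to X_1 \to X_2 \to \dots$ as the unit and verify the universal property directly. Given an algebraic Kan complex $Y$ and a morphism $\varphi: X \to V(Y)$ in $\AlgQ$, I want a unique $\hat\varphi: G(X) \to Y$ in $\Alg$ with $\hat\varphi \circ \eta_X = \varphi$. Inductively, at each stage $X_k \to X_{k+1}$ the construction freely attaches new $n$-simplices as fillers for certain outer horns $\Lambda^k(n) \to X_k$; any extension to $Y$ must send those freshly attached fillers to the distinguished outer-horn fillers in $Y$ obtained by composing the horn with the map into $Y$, which gives both existence and uniqueness of the extension to $X_{k+1}$. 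Passing to the colimit yields $\hat\varphi$, and it preserves distinguished fillers for inner horns (since $\varphi$ does, and these were never altered) and outer horns (by construction), hence is a morphism in $\Alg$.

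For the inner square, $U_Q \circ V = U_A$ holds essentially on the nose: $V$ forgets the outer-horn filler assignment but leaves the underlying simplicial set and the inner-horn fillers intact, and both $U_A, U_Q$ are the forgetful functors to $\sSet$. Commutativity of the outer square, $F_A \cong G \circ F_Q$, follows formally from uniqueness of left adjoints: since $U_A = U_Q \circ V$, their left adjoints are naturally isomorphic. One can also see this concretely: $F_Q(S)$ freely attaches inner-horn fillers to $S$, and $G$ subsequently freely attaches outer-horn fillers; since inner and outer horns are disjoint, the composite construction is naturally isomorphic to the single-step free filler attachment defining $F_A(S)$.

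The main obstacle, as in Proposition~\ref{adjoint}, is to make the inductive extension argument rigorous for the (possibly) transfinite tower defining $G(X)$. The key small point to verify is that each inclusion $X_k \to X_{k+1}$ is a trivial monomorphism, so that a morphism $\Lambda^k(n) \to G(X)$ factors through a unique minimal stage $X_k$, which makes the forced value of $\hat\varphi$ on the newly attached $n$-simplex unambiguous. This monicity follows exactly as in Section~\ref{sec:free} from the standing assumption that trivial cofibrations in $\sSet_Q$ are monic, so no new technical input is needed beyond what was already established.
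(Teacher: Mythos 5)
Your argument is correct and follows essentially the same route as the paper: the adjunction $G \dashv V$ is read off from the free transfinite construction of $G$ exactly as in Proposition~\ref{adjoint}, the square of forgetful functors commutes on the nose since $U_Q \circ V = U_A$, and $G \circ F_Q \cong F_A$ follows from uniqueness of left adjoints. The only (cosmetic) discrepancy is that you swap the labels ``inner'' and ``outer'' for the two squares relative to the paper's usage.
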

\begin{proof}
By definition of $G$ it is clear that it is left adjoint to $V$. In diagram \eqref{square} the commutativity of the outer square is just a trivial statement about the forgetful functors $U_Q, U_A$ and $V$. Commutativity of the inner square means that we have to show that $G \circ F_Q$  and $F_A$ are naturally isomorphic. This follows from the fact that $G \circ F_Q$ and $F_A$ are both left adjoint to $U_Q \circ V = U_A$.
\end{proof}
\begin{prop}
The pair $(G,V)$ is a Quillen adjunction.
\end{prop}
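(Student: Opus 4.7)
The plan is to show that the left adjoint $G : \AlgQ \to \Alg$ preserves cofibrations and trivial cofibrations, so that $(G,V)$ is a Quillen adjunction. Theorem \ref{AlgQMain} provides us with an \emph{explicit} generating set of (trivial) cofibrations for $\AlgQ$, and since $G$ is a left adjoint it preserves pushouts, transfinite compositions and retracts; so it is enough to check that $G$ sends each generator to a (trivial) cofibration in $\Alg$.

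The essential computational input is the natural isomorphism $G \circ F_Q \cong F_A$ established in the preceding proposition from the commutativity of the inner square in \eqref{square}. Applied to the generating cofibrations $F_Q \partial\Delta(n) \to F_Q\Delta(n)$ of $\AlgQ$, this identifies their images with $F_A \partial\Delta(n) \to F_A\Delta(n)$, which are the generating cofibrations of $\Alg$ by Theorem \ref{algKanMain}. For the inner horn generators $F_Q\Lambda^k(n) \to F_Q\Delta(n)$ with $0<k<n$, the same isomorphism returns $F_A\Lambda^k(n) \to F_A\Delta(n)$, which sit inside the generating trivial cofibrations of $\Alg$ (whose horn family is indexed by \emph{all} $0 \le k \le n$).

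The only step requiring a separate argument is the remaining generator $F_Q pt \to F_Q I$: its image is $F_A pt \to F_A I$, and to see that this is a trivial cofibration I would observe that $pt \to I$ is a monomorphism of simplicial sets whose target is the nerve of a contractible groupoid (hence weakly contractible), so it is a trivial cofibration in $\sSet_Q$; applying the left Quillen functor $F_A$ of Theorem \ref{algKanMain}.5 then yields a trivial cofibration in $\Alg$. The main (mild) obstacle is precisely this last case, since $F_Q pt \to F_Q I$ is the one generator with no direct counterpart in the generating data of $\Alg$; once it is handled, $G$ preserves generating (trivial) cofibrations, hence all (trivial) cofibrations, so $G$ is left Quillen and $(G,V)$ is a Quillen adjunction as claimed.
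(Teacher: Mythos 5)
Your argument is correct, but it runs in the opposite direction from the paper's. The paper verifies the Quillen adjunction on the right adjoint side: since fibrations and weak equivalences in both $\Alg$ and $\AlgQ$ are detected on underlying simplicial sets, and $U_A = U_Q \circ V$, the statement that $V$ preserves (trivial) fibrations reduces immediately to the classical fact that a Quillen (trivial) fibration of simplicial sets is a Joyal (trivial) fibration, i.e.\ that $\sSet_Q$ is a left Bousfield localization of $\sSet_J$. That argument is three lines and needs no generating sets at all. You instead work on the left adjoint side, reducing to the generating (trivial) cofibrations of $\AlgQ$ via the saturation properties preserved by the left adjoint $G$, and then using the natural isomorphism $G \circ F_Q \cong F_A$ to identify their images. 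This is a perfectly sound alternative: the boundary and inner-horn generators land among the generators of $\Alg$, and your treatment of the one extra generator $F_Q\, pt \to F_Q I$ is correct ($pt \to I$ is a monomorphism between weakly contractible simplicial sets, hence a trivial cofibration in $\sSet_Q$, and $F_A$ is left Quillen). The trade-off is that your route leans on Theorem \ref{AlgQMain}, i.e.\ on having an explicit set of generating trivial cofibrations for $\AlgQ$ --- which the paper does establish, but which is a substantially heavier input than the localization fact the paper's proof uses; on the other hand, your argument makes visible exactly where the two generating families differ, namely at the interval generator, which is conceptually informative.
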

\begin{proof}
It is enough to show that $V$ preserves fibrations and trivial fibrations. Let $f: X \to Y$ be a (trivial) fibration in $\Alg$. We want to show that $V(f) : V(X) \to V(Y)$ is a (trivial) fibration in $\AlgQ$. By definition \ref{modelQuasi} this is the case iff $U_Q(V(f))$ is a Joyal (trivial) fibration in $\sSet_J$. By definition \ref{algKan} we already know that $U_A(f) = U_Q(V(f))$ is a Quillen (trivial) fibration. Thus the claim follows from the fact that a Quillen (trivial) fibration is a Joyal (trivial) fibration.  This is equivalent to the statement that $Id: \sSet_Q \to \sSet_J$ is a right Quillen functor or to the statement that $\sSet_Q$ is a left Bousfield localization of $\sSet_J$.
\end{proof}

\section{Outlook}\label{outlook}

In this section we want to sketch further possible applications of the general theory of algebraic fibrant objects. They always lead to an algebraic version of the structure modelled by the model category.

\begin{itemize}
\item
There is a nice extension of the category of simplicial sets called dendroidal sets, introduced in \cite{MW07}. Furthermore there is also a model structure on dendroidal sets defined by Cisinski and Moerdijk, which extends the Joyal model structure on simplicial sets, see \cite{MC07}. The fibrant objects are called inner Kan complexes and are a model for $(\infty,1)$-operads. The cofibrations in the category are (normal) monomorphisms,  hence we can apply our general result. This leads to an algebraic model for $(\infty,1)$-operads. Similar to the case of algebraic quasi-categories described in section \ref{algqdef} there is an explicit set of generating trivial cofibrations for algebraic dendroidal sets. Dendroidal sets have been introduced to give recursive definitions of weak $n$-categories. It would be interesting to see whether algebraic dendroidal sets could be used to produce algebraic analogues of these constructions.
\item
The model structure of Quillen and Joyal both use simplicial sets to model $\infty$-groupoids and $(\infty,1)$-categories as explained in section \ref{sec:kan} and \ref{sec:quasi}. But simplicial sets can also be used as a model for all weak $\infty$-categories. This goes back to ideas of Street. In order to do so, simplicial sets are equipped with the extra structure of thin elements which allow to keep track of invertible higher cells. The category obtained in this way is called the category of stratified simplicial sets. On this category there is a model category structure constructed by Verity \cite{Ver06}. The fibrant objects are called weak complicial sets. In this model structure the cofibrations are also monomorphisms, hence by applying our general construction this leads to an algebraic version of weak $\infty$-categories. It would be interesting to investigate this model structure in more detail, in particular to see how the nerves of strict $\infty$-categories lead to algebraic weak complicial sets.
\item
Simplicial presheaves over a site $S$ are presheaves with values in the category of simplicial sets. They also carry model structures which exhibits them as models for $\infty$-stacks. The two most important model structures are the local projective and the local injective model structure, see e.g. \cite{Jardine} and \cite{Dugger01}. In both of them the cofibrations are monomorphisms and hence we obtain two categories of algebraic simplicial presheaves. They thus form a model for algebraic $\infty$-stacks. Using results of \cite{DHI04} one can make explicit the descent conditions in these algebraic $\infty$-stacks. This might provide a framework in which some gluing constructions can be made direct and functorial. 
\end{itemize}
There are of course many more possible applications of the general construction. Let us finally note that the fibrant replacement monad $T: \C \to \C$ investigated in Proposition \ref{fibmon} is very useful in some applications, even if we do not want to deal with algebraic fibrant objects. It allows for example to replace diagrams (e.g. homotopy pushout diagrams) nicely in very general situations.

\bibliographystyle{alpha}
\bibliography{AlgKan}

\end{document}